\documentclass[a4paper]{article}
%

%
\usepackage[english]{babel}

\usepackage[pdftex]{graphicx}
\usepackage{graphicx}
\usepackage{epstopdf} 
\usepackage[normal]{subfigure}
\usepackage{color}
\usepackage{amsfonts,amsmath,amssymb,amsthm,mathtools}
\usepackage{float}
\usepackage[inline]{enumitem}
\usepackage{a4wide}
\usepackage[pagewise,mathlines]{lineno}

\newtheorem{theorem}{Theorem}

\newtheorem{lemma}[theorem]{Lemma}
\theoremstyle{remark}\newtheorem{remark}[theorem]{Remark}

\def\T{\textrm{T}}

\newcommand{\RR}{\mathbb{R}}

\newcommand{\ie}{{\it i.e.}}

\begin{document}
\title{Structure preserving schemes for Fokker-Planck equations with nonconstant diffusion matrices}

\author{Nadia Loy \thanks{Department of Mathematical Sciences ``G. L. Lagrange'', Politecnico di Torino, Corso Duca degli Abruzzi 24, 10129 Torino, Italy,                 (\texttt{nadia.loy@polito.it})}\and
        Mattia Zanella\thanks{Department of Mathematics "F. Casorati", Via A. Ferrata 5, 27100 Pavia, Italy
                (\texttt{mattia.zanella@unipv.it})}}
                		
\date{}

\maketitle

\begin{abstract}
In this work we consider an extension of a recently proposed structure preserving numerical scheme for nonlinear Fokker-Planck-type equations to the case of nonconstant full diffusion matrices. While in existing works the schemes are formulated in a one-dimensional setting, here we consider exclusively the two-dimensional case. We prove that the proposed schemes preserve fundamental structural properties like nonnegativity of the solution without restriction on the size of the mesh and entropy dissipation. Moreover, all the methods presented here are at least second order accurate in the transient regimes and arbitrarily high order for large times in the hypothesis in which the flux vanishes at the stationary state. Suitable numerical tests will confirm the theoretical results. \\

\noindent
\textbf{Keywords:} Fokker-Planck equations, positivity preserving, structure preserving methods, finite difference schemes \\
\textbf{Mathematics Subject Classification:} 35Q70, 35Q84, 65N06
\end{abstract}

\tableofcontents

\section{Introduction}
We are interested in nonlinear Fokker-Planck equations describing the evolution of a multivariate distribution function $f(w,t)\ge 0$, with $t\geq 0, w \in \Omega \subseteq \mathbb{R}^{2}$ of the following form
\begin{equation}\label{FP1}
 \begin{cases}\vspace{0.5em}
 \partial_t f(w,t)=\nabla_{w}\cdot \mathcal{F}\textcolor{black}{(w,t)}, \quad \mathcal{F}\textcolor{black}{(w,t)}= \mathcal{B}[f](w,t)f(w,t)+\nabla_w \cdot \big( \mathbb{D}(w)f(w,t)\big), \, w\in\Omega,
 \\
\mathcal{F}(w,t)\cdot \boldsymbol{n}(w)=0, \quad w \in \partial \Omega,\\
 f(w,0)=f_0(w), \, w \in \Omega,
 \end{cases}
\end{equation}
where $\Omega \subset \mathbb{R}^{\textcolor{black}{2}}$ is bounded and $\textbf{n}(w)$ is the outward normal unit vector defined for $w \in \partial \Omega$. In particular the no-flux boundary condition $\mathcal{F}\textcolor{black}{(w,t)}\cdot \boldsymbol{n}(w)=0, \, w \in \partial \Omega$ guarantees conservation of mass in $\Omega$, i.e. $\int_{\Omega}f(w,t) \, dw=\int_{\Omega}f_0(w) \, dw \, \forall t\ge0$.
The drift term $\mathcal{B}[f](\cdot,t)$ can classically be defined as the following nonlocal bounded operator \begin{equation}\label{B_operator}
\begin{array}{rl}
\mathcal{B}[f](\cdot,t): &\Omega \longmapsto \mathbb{R}^{\textcolor{black}{2}} \\[8pt]
 &w \longmapsto \mathcal{B}[f](w,t)=\displaystyle \int_{\Omega}P(w,w_*)(w-w_*)f(w_*,t) dw_*,
\end{array}
\end{equation}
where $P(\cdot,\cdot):\Omega\times \Omega\rightarrow \RR^+$. Therefore, the drift term $\mathcal{B}[f]\textcolor{black}{(w,t)}$ depends on time only through $f\textcolor{black}{(w,t)}$.
In \eqref{FP1} we consider a nonconstant diffusion matrix $\mathbb{D}(w)$ which is supposed to be symmetric and positive definite in the interior of $\Omega$, while it vanishes at the border, i.e.
\begin{equation}\label{eq:Diff_nulla}
\mathbb{D}(w)=0, \quad w \in \partial \Omega.
\end{equation}


Kinetic-type equations with general diffusion often arise in the derivation of aggregate descriptions of many particles systems. Without intending to review the very huge literature on this topic we mention \cite{BCC,BCH,BDZ,CFTV,DFT} for applications to collective phenomena, \cite{CHP,LP,OL,YEE} for related models in self-organized biological aggregations, and \cite{Furioli,Tosc1,PT,LT,TV,TZ} for their relation with Boltzmann-type modelling. Kinetic equations have a strong physical interpretation as they describe the time evolution of probability density functions that describe the statistical distribution of the microscopic variables. Therefore, their solution should be nonnegative. Moreover, their trend to equilibrium is studied through an entropy functional that is dissipated in time and minimized by a unique stationary equilibrium. The necessity to deal with a general diffusion matrix arises from various applications where heterogeneity appears in the evolution of the distribution function. \textcolor{black}{Of course, this gives rise to a genuinely multi-dimensional problem whose stationary state makes the divergence of the flux vanish.}

\textcolor{black}{In this manuscript we concentrate on the \textcolor{black}{the two-dimensional problem and on the }special case where also the flux of the problem vanishes at the stationary equilibrium.} We will develop in this setting finite difference numerical schemes for the introduced problem that preserve structural properties like nonnegativity of the solution, entropy dissipation and that approximate with arbitrary accuracy the stationary state of the problem. 
Furthermore, the methods here developed are second order accurate in the transient regime and do not require restrictions on the mesh size. The schemes \textcolor{black}{with the introduced features are usually referred to as structure preserving schemes (SP)}. The methods here derived are based on recent works \textcolor{black}{in} this direction \cite{CCP,DPZ,Pareschi_Zanella,PZ2} and follow pioneering works on linear Fokker-Planck equations \cite{CC,LLPS}, see also \cite{BCDS,BD,FPR,SG,SSLMM}. We refer to \cite{BCF,BCHR,CCH,CHJS,Q} for related methods in the case of degenerate diffusion and to \cite{Gosse} for a recent survey on methods preserving steady states of balance laws.

In more details the paper is organized as follows. In Section \ref{sect:SP} we derive the structure preserving  scheme. We will compare the obtained scheme with recent results for \textcolor{black}{1D problems}. Hence, in Section \ref{sect:prop} we prove nonnegativity of the numerical solution in the case of explicit and semi-implicit time integration. Sufficient conditions will be explicated in terms of bounds on the time step. 
The trend to equilibrium is then investigated in Section \ref{sect:entropy} \textcolor{black}{in the case of linear problems}, here we prove that the constructed SP scheme dissipates the numerical entropy. Finally in Section \ref{sect:applications} we present several applications of the schemes in Fokker-Planck problems describing emerging patterns in interacting systems.
 Some conclusions are reported at the end of the manuscript. 

\section{Structure preserving schemes and \textcolor{black}{full nonconstant diffusion matrices}}\label{sect:SP}

In this section we focus on the design of a numerical scheme for the nonlinear Fokker-Planck equation with general diffusion matrix of the form \eqref{FP1}.   \\
\subsection{Stationary states}
We first observe that in \eqref{FP1} the two dimensional flux $\mathcal{F}\textcolor{black}{(w,t)}=\left[ \mathcal{F}^x(w,t),\mathcal{F}^y(w,t)\right]^\T$ is given by 
\begin{equation}\label{flux.an}
\mathcal{F}(w,t)=\mathcal{C}(w,t)f(w,t)+\mathbb{D}(w)\nabla_w f(w,t), 
\end{equation}
with $\mathbb D(w)$ a nonconstant diffusion matrix of the form 
$$
\mathbb{D}(w)=\left[\begin{matrix}
\mathbb{D}^{1,1}(w) \quad \mathbb{D}^{1,2}(w) \\[3pt]
\mathbb{D}^{2,1}(w) \quad \mathbb{D}^{2,2}(w)
\end{matrix}\right],
$$
\textcolor{black}{such that $\mathbb D  \in \mathcal{C}^2(\Omega)$ and is symmetric and positive definite in the interior of $\Omega$. Since we considered $\mathbb{D}\textcolor{black}{(w)}$ symmetric and positive definite its determinant is strictly positive, i.e. $|\mathbb{D}\textcolor{black}{(w)}|>0,  \forall \textcolor{black}{w \, \in \Omega}$. We remark that,
as $|\mathbb{D}\textcolor{black}{(w)}|=\mathbb{D}_{1,1}\textcolor{black}{(w)}\mathbb{D}_{2,2}\textcolor{black}{(w)}-\mathbb{D}_{1,2}\textcolor{black}{(w)}\mathbb{D}_{2,1}\textcolor{black}{(w)}=\mathbb{D}_{1,1}\textcolor{black}{(w)}\mathbb{D}_{2,2}\textcolor{black}{(w)}-\mathbb{D}_{1,2}^2\textcolor{black}{(w)}$, the diagonal elements of the diffusion matrix, i.e. $\mathbb{D}^{1,1}(w)$ and $\mathbb{D}^{2,2}(w)$, cannot vanish $\forall w\in\Omega$.} 
Moreover, in \eqref{flux.an} we considered $\mathcal{C}(w,t)=\left[\mathcal{C}^x(w,t),\mathcal{C}^y(w,t)\right]^\T$ where
$$
\mathcal{C}^x(w,t)=\mathcal{B}[f]^x(w,t)+\partial_{x}\mathbb{D}^{1,1}(w)+\partial_{y}\mathbb{D}^{2,1}(w),
$$
$$
\mathcal{C}^y(w,t)=\mathcal{B}[f]^y(w,t)+\partial_{x}\mathbb{D}^{1,2}(w)+\partial_{y}\mathbb{D}^{2,2}(w),
$$
and $\mathcal{B}[f](w,t)=\left[\mathcal{B}[f]^x(w,t),\mathcal{B}[f]^y(w,t) \right]$.
In \eqref{flux.an} we indicated with $\partial_x$, $\partial_y$ the partial derivatives in the directions defined by the components $w_x$ and $w_y$, respectively.  
Therefore the components of the flux $\mathcal{F}\textcolor{black}{(w,t)}$ are given by 
\begin{align}
\label{eq:Fx}
\mathcal{F}^x(w,t) &=\mathcal{C}^x(w,t)f(w,t)+\mathbb{D}^{1,1}(w)\partial_{x}f(w,t)+\mathbb{D}^{1,2}(w)\partial_{y}f(w,t),\\
\label{eq:Fy}
\mathcal{F}^y(w,t) &=\mathcal{C}^y(w,t)f(w,t)+\mathbb{D}^{2,1}(w)\partial_{x}f(w,t)+\mathbb{D}^{2,2}(w)\partial_{y}f(w,t).
\end{align}

\textcolor{black}{We want to approximate the stationary state, i.e. the multivariate distribution function $f^{\infty}(w)$ satisfying
\begin{equation}\label{st_state}
\nabla \cdot \mathcal{F}^{\infty}\textcolor{black}{(w)}=0,
\end{equation}
where $\mathcal{F}^{\infty}\textcolor{black}{(w)}=\mathcal{C}(w)f^{\infty}(w)+\mathbb{D}(w)\nabla_w f^{\infty}(w)$.  \textcolor{black}{We remark that now $\mathcal{C}=\mathcal{C}(w)$ depends only on $w$ as, in general, $\mathcal{C}$ depends on time only through $f$ and we are now considering the stationary state.}
A sufficient condition for $f^{\infty}\textcolor{black}{(w)}$ to be a stationary state is that it makes the flux vanish, i.e. 
\begin{equation}\label{st_state.nul_flux}
\mathcal{F}^{\infty}\textcolor{black}{(w)}=0
\end{equation}
that corresponds to have a constant $\mathcal{F}^{\infty}\textcolor{black}{(w)}$ that may be zero by fixing accordingly the no-flux boundary condition.
Equation \eqref{st_state.nul_flux} is a necessary condition only in one-dimension, while in a two-dimensional setting there may be stationary states satisfying \eqref{st_state} such that $\mathcal{F}^{\infty}\textcolor{black}{(w)}\neq 0$.
We concentrate on the problems in which \eqref{st_state.nul_flux} is satisfied.}

\textcolor{black}{
Let us observe that in 2D the condition $\mathcal{F}^{\infty}\textcolor{black}{(w)}=0$ defines a decoupled system of Fokker-Planck equations if $\mathbb{D}^{1,2}(w)=\mathbb{D}^{2,1}(w)=0$, $w\in \Omega$, that can be solved through standard schemes. In 1D, several numerical strategies to catch the emerging equilibrium have been designed. Schemes for Fokker-Planck-type equations have been  studied in the community: without intending to review the huge literature in this direction we mention schemes for linear drift-diffusion-type problems  \cite{BD,CC,LLPS,SG} together with related entropy methods \cite{BCDS,CHJS}, and recent developments for the general energy-decaying problems \cite{BCH,Pareschi_Zanella}. 
In particular, in \cite{Pareschi_Zanella} an analogous nonlinear Fokker-Planck equation with diagonal diffusion matrix was considered. In the present work, we will derive the scheme under the hypothesis in which the flux vanishes at the stationary state.
}

\textcolor{black}{Let us consider} $\mathbb D^{1,1}\textcolor{black}{(w)}, \mathbb D^{2,2}\textcolor{black}{(w)}\ne 0 \, \forall\textcolor{black}{w} \in \Omega$, we can define the following quasi-stationary system for the components of the flux
\begin{equation}\label{eq:conditions_SS}
\begin{split}
\mathbb D^{1,1}\textcolor{black}{(w)} \partial_x f\textcolor{black}{(w,t)} &= -f\textcolor{black}{(w,t)} \mathcal C^x- \mathbb D^{1,2}\textcolor{black}{(w)}\partial_y f\textcolor{black}{(w,t)}, \\
\mathbb D^{2,2}\textcolor{black}{(w)} \partial_y f\textcolor{black}{(w,t)} &= -f\textcolor{black}{(w,t)} \mathcal C^y - \mathbb D^{2,1}\textcolor{black}{(w)}\partial_x f\textcolor{black}{(w,t)}.
\end{split}
\end{equation}
\textcolor{black}{The latter system is quasi-stationary because $\mathcal{C}^x\textcolor{black}{(w,t)}$ and $\mathcal{C}^y\textcolor{black}{(w,t)}$ depend on $f\textcolor{black}{(w,t)}$.}
Let us observe that, thanks to the introduction of the matrix characterizing the diffusion the equations \eqref{eq:conditions_SS} are not decoupled unless $\mathbb D$ is diagonal. \textcolor{black}{From  \eqref{eq:conditions_SS}} we have 
\begin{equation}\label{sys.1}
\begin{split}
\left(\mathbb{D}^{1,1}\textcolor{black}{(w)}-\dfrac{\mathbb{D}^{1,2}\textcolor{black}{(w)}\mathbb{D}^{2,1}\textcolor{black}{(w)}}{\mathbb{D}^{2,2}\textcolor{black}{(w)}}\right) \partial_{x}f\textcolor{black}{(w,t)} &= -f\textcolor{black}{(w,t)}\left(\mathcal{C}^x\textcolor{black}{(w,t)}- \dfrac{\mathbb{D}^{1,2}\textcolor{black}{(w)}}{\mathbb{D}^{2,2}\textcolor{black}{(w)}}\mathcal{C}^y\textcolor{black}{(w,t)}\right), \\
\left(\mathbb{D}^{2,2}\textcolor{black}{(w)}-\dfrac{\mathbb{D}^{1,2}\textcolor{black}{(w)}\mathbb{D}^{2,1}\textcolor{black}{(w)}}{\mathbb{D}^{1,1}\textcolor{black}{(w)}} \right) \partial_{y}f\textcolor{black}{(w,t)} &= -f\textcolor{black}{(w,t)}\left(\mathcal{C}^y\textcolor{black}{(w,t)}-\dfrac{\mathbb{D}^{2,1}\textcolor{black}{(w)}}{\mathbb{D}^{1,1}\textcolor{black}{(w)}}\mathcal{C}^x\textcolor{black}{(w,t)}\right).
\end{split}
\end{equation}
In the following we will adopt the notations
\begin{equation}\label{D.call}
\mathcal{D}^1(w)=\mathbb{D}^{1,1}(w)-\dfrac{\mathbb{D}^{1,2}(w)\mathbb{D}^{2,1}(w)}{\mathbb{D}^{2,2}(w)},\qquad
\mathcal{D}^2(w)=\mathbb{D}^{2,2}(w)-\dfrac{\mathbb{D}^{1,2}(w)\mathbb{D}^{2,1}(w)}{\mathbb{D}^{1,1}(w)},
\end{equation}
\textcolor{black}{that are positive quantities since $\mathbb{D}$ is positive definite in the interior of $\Omega$.}

It is worth stressing how in the case $\mathbb D^{1,2}(w) = \mathbb D^{2,1}(w) = 0$, the two equations in \eqref{sys.1} can be decoupled and we basically recover the classical quasi-stationary formulation in each direction, we refer to  \cite{LLPS,Pareschi_Zanella} for more details on the concept of quasi-equilibrium distribution and to \cite{PR} for further applications.  Furthermore, we remark how system \eqref{sys.1} is in general not \textcolor{black}{analytically} solvable, except in some special cases due to the nonlinearity on the right hand side and the intrinsically  coupled nature of the system. We overcome this difficulty in the quasi steady-state approximation integrating the equations of system \eqref{sys.1} over numerical grids. 

\subsection{Derivation of the scheme}\label{derivation}
Let us consider $\Omega=[a,b]\times [a,b]$ and let us introduce the regular grid 
\[
 W=\left\lbrace w_{i,j}=(w_{x,i},w_{y,j}) \in \Omega | w_{x,i}=a+i\Delta w, w_{y,j}=a+j\Delta w, i,j=0,..,N, \Delta w=\frac{b-a}{N}\right\rbrace.
\] 
We shall also define the mid points grid as 
\[
\begin{split} 
W^{\textrm{mid}}= &\left\lbrace w_{i+ 1/2,j+1/2} =(w_{x,i+1/2},w_{y,j+1/2})\in \Omega | \right. \\
&\qquad\qquad \left. w_{x,i+1/2}=a+\frac{i\Delta w}{2}, w_{y,j+1/2}=a+\frac{j\Delta w}{2}, i,j=0,..,N-1\right\rbrace
\end{split}
\] 
and we remark that $W^{\textrm{mid}}$ is in the interior of $\Omega$.
Without loss of generality, and to avoid unnecessary complications, we consider a square domain with an uniform grid, i.e. with square cells; anyway what  presented in the following can be easily generalized to the case of rectangular cells, in which $w_{x,{i+1}}-w_{x,i}=\Delta w_1$ and $w_{y,{j+1}}-w_{y,j}=\Delta w_2$. Next, if we integrate the two equations in \eqref{sys.1} with respect to $w_x$ on $[w_{i,j},w_{i+1,j}]$ and with respect to $w_y$ on $[w_{i,j},w_{i,j+1}]$, respectively, we have
\begin{equation*}
\begin{split}
\displaystyle\int_{w_{i,j}}^{w_{i+1,j}} \dfrac{\partial_{x}f(w,t)}{f(w,t)} \, dw_x &=-\displaystyle\int_{w_{i,j}}^{w_{i+1,j}} \dfrac{1}{\mathcal D^1(w)} \left(\mathcal{C}^x(w,t)- \dfrac{\mathbb{D}^{1,2}(w)}{\mathbb{D}^{2,2}(w)}\mathcal{C}^y(w,t) \right) \,dw_x,\\
\displaystyle\int_{w_{i,j}}^{w_{i,j+1}} \dfrac{\partial_{y}f(w,t)}{f(w,t)} \, dw_y &=-\displaystyle\int_{w_{i,j}}^{w_{i,j+1}} \dfrac{1}{\mathcal D^2(w)} \left(\mathcal{C}^y(w,t)-\dfrac{\mathbb{D}^{2,1}(w)}{\mathbb{D}^{1,1}(w)}\mathcal{C}^x(w,t)\right) \, dw_y,
\end{split}
\end{equation*}
leading respectively to 
\begin{equation}\label{an.steady.state.1}
\displaystyle \dfrac{f(w_{i+1,j},t)}{f(w_{i,j},t)}=\exp \left\{ - \displaystyle\int_{w_{i,j}}^{w_{i+1,j}} \dfrac{1}{\mathcal D^1(w)} \left(\mathcal{C}^x(w,t)- \dfrac{\mathbb{D}^{1,2}(w)}{\mathbb{D}^{2,2}(w)}\mathcal{C}^y(w,t)\right) \,dw_x\right\}
\end{equation}
and
\begin{equation}\label{an.steady.state.2}
\displaystyle \dfrac{f(w_{i,j+1},t)}{f(w_{i,j},t)}=\exp \left\{ - \displaystyle\int_{w_{i,j}}^{w_{i,j+1}} \dfrac{1}{\mathcal D^2(w)}\left(\mathcal{C}^y(w,t)-\dfrac{\mathbb{D}^{2,1}(w)}{\mathbb{D}^{1,1}(w)}\mathcal{C}^x(w,t)\right) \, dw_y\right\}. 
\end{equation}

 Let us denote $f_{i,j}(t)$ an approximation of $f(w_{i,j},t)$ over the grid $W$. Let us introduce the following finite difference scheme where we keep the time continuous
\begin{equation}\label{eq:cons_disc}
\dfrac{d}{dt}f_{i,j}(t) = \dfrac{\mathcal{F}^{x}_{i+1/2,j}(t)-\mathcal{F}^{x}_{i-1/2,j}(t)}{\Delta w} + \dfrac{\mathcal{F}^{y}_{i,j+1/2}(t)-\mathcal{F}^{y}_{i,j-1/2}(t)}{\Delta w} ,
\end{equation}
where the right hand side is a numerical approximation of the operator $\nabla_w\cdot \mathcal{F}\textcolor{black}{(w,t)}$ on the grid $W$ at time $t>0$. The quantities $\mathcal{F}^{x}_{i\pm 1/2,j}\textcolor{black}{(t)}$, $\mathcal{F}^{y}_{i,j\pm 1/2}\textcolor{black}{(t)}$ are the numerical flux functions relative to the introduced numerical discretization. We want to define the numerical fluxes analogously to \cite{Pareschi_Zanella}, where they give a second order definition for the two components of the numerical flux, i.e. $\mathcal{F}^{x}_{i+1/2,j}\textcolor{black}{(t)}$  and $\mathcal{F}^{y}_{i,j+1/2}\textcolor{black}{(t)}$ are combinations of the grid points $i+1$ and $i$, $j+1$ and $j$ respectively. In the rest of this section we will omit the explicit dependency on time. 
\\

In particular, in \cite{Pareschi_Zanella}, where $\mathbb{D}^{1,2}(w)=\mathbb{D}^{2,1}(w)=0$, the authors define the numerical fluxes as 
\begin{equation}\label{eq:fluxes_partial.old}
\begin{split}
\mathcal{F}^{x}_{i+1/2,j}\textcolor{black}{(t)} &=\tilde{\mathcal{C}}^{x}_{i+1/2,j}\textcolor{black}{(t)}\tilde{f}_{i+1/2,j}\textcolor{black}{(t)}+\mathbb{D}^{1,1}_{i+1/2,j} \dfrac{f_{i+1,j}\textcolor{black}{(t)}-f_{i,j}\textcolor{black}{(t)}}{\Delta w}, \\
\mathcal{F}^{y}_{i,j+1/2}\textcolor{black}{(t)} &=\tilde{\mathcal{C}}^{y}_{i,j+1/2}\textcolor{black}{(t)}\tilde{f}_{i,j+1/2}\textcolor{black}{(t)}+\mathbb{D}^{2,2}_{i,j+1/2} \dfrac{f_{i,j+1}\textcolor{black}{(t)}-f_{i,j}\textcolor{black}{(t)}}{\Delta w},
\end{split}
\end{equation}
where $\tilde{f}_{i+1/2,j}\textcolor{black}{(t)},\tilde{f}_{i,j+1/2}\textcolor{black}{(t)}$ are classically defined as 
\begin{equation}\label{f.tilde}
\begin{split}
\tilde{f}_{i+1/2,j}\textcolor{black}{(t)}=(1-\delta_{i+1/2,j}\textcolor{black}{(t)})f_{i+1,j}\textcolor{black}{(t)}+\delta_{i+1/2,j}\textcolor{black}{(t)}f_{i,j}\textcolor{black}{(t)} ,\\
 \tilde{f}_{i,j+1/2}\textcolor{black}{(t)}=(1-\delta_{i,j+1/2}\textcolor{black}{(t)})f_{i,j+1}\textcolor{black}{(t)}+\delta_{i,j+1/2}\textcolor{black}{(t)}f_{i,j}\textcolor{black}{(t)},
 \end{split}
\end{equation}
see \cite{CC, MB, Pareschi_Zanella}. The weight functions $\delta_{i+1/2,j}\textcolor{black}{(t)}$, $\delta_{i,j+1/2}\textcolor{black}{(t)}$ are hence defined in such a way that they have values in $(0,1)$  and, thus, $\tilde{f}_{i+1/2,j}\textcolor{black}{(t)}$ and $\tilde{f}_{i,j+1/2}\textcolor{black}{(t)}$ are convex combinations of $f_{i+1,j}\textcolor{black}{(t)}, f_{i,j}\textcolor{black}{(t)}$ and $f_{i,j+1}\textcolor{black}{(t)},f_{i+1,j}\textcolor{black}{(t)}$ respectively. 
\\
In the present setting, since the extra diagonal terms of the diffusion matrix do not vanish, the definition of the numerical fluxes must be modified accordingly. In particular, we shall write as an extension of \eqref{eq:fluxes_partial.old} 
\begin{equation}\label{eq:fluxes_partial}
\begin{split}
\mathcal{F}^{x}_{i+1/2,j}\textcolor{black}{(t)} &=\tilde{\mathcal{C}}^{x}_{i+1/2,j}\textcolor{black}{(t)}\tilde{f}_{i+1/2,j}\textcolor{black}{(t)}+\mathbb{D}^{1,1}_{i+1/2,j} \dfrac{f_{i+1,j}\textcolor{black}{(t)}-f_{i,j}\textcolor{black}{(t)}}{\Delta w}+\mathbb{D}^{1,2}_{i+1/2,j}[\partial_{y}f]_{i,j}\textcolor{black}{(t)}, \\
\mathcal{F}^{y}_{i,j+1/2}\textcolor{black}{(t)} &=\tilde{\mathcal{C}}^{y}_{i,j+1/2}\textcolor{black}{(t)}\tilde{f}_{i,j+1/2}\textcolor{black}{(t)}+\mathbb{D}^{2,2}_{i,j+1/2} \dfrac{f_{i,j+1}\textcolor{black}{(t)}-f_{i,j}\textcolor{black}{(t)}}{\Delta w}+\mathbb{D}^{2,1}_{i,j+1/2}[\partial_{x}f]_{i,j}\textcolor{black}{(t)},
\end{split}
\end{equation}
where $[\partial_{y}f]_{i,j}\textcolor{black}{(t)}$ and $[\partial_{x}f]_{i,j}\textcolor{black}{(t)}$ are numerical approximations of the partial derivatives $\partial_{y}f(w,t)$ and $\partial_{x}f(w,t)$ \textcolor{black}{that we need to determine}. As we want to perform a directional splitting,  we have to determine the approximations $[\partial_{y}f]_{i,j}\textcolor{black}{(t)}$ and $[\partial_{x}f]_{i,j}\textcolor{black}{(t)}$  in the complementary direction with respect to the one of the differentiation, i.e. as a combination of $f_{i+1,j}\textcolor{black}{(t)}, f_{i,j}\textcolor{black}{(t)}$ and $f_{i,j+1}\textcolor{black}{(t)}, f_{i,j}\textcolor{black}{(t)}$ respectively.
In order to obtain such approximations, in addition to $\mathcal{F}^{x}_{i+1/2,j}\textcolor{black}{(t)} = 0$ and $\mathcal{F}^{y}_{i,j+1/2}\textcolor{black}{(t)} = 0$, we consider the discretization of the two components of the numerical fluxes in the complementary direction, $\ie$ we discretize $\mathcal{F}^x\textcolor{black}{(w,t)}$ in the $y$ direction and $\mathcal{F}^y\textcolor{black}{(w,t)}$  in the $x$ direction:
\begin{equation}\label{eq:flux_compl}
\begin{split}
\mathcal{F}^{x}_{i,j+1/2}\textcolor{black}{(t)} &=\tilde{\mathcal{C}}^{x}_{i,j+1/2}\textcolor{black}{(t)}\tilde{f}_{i,j+1/2}\textcolor{black}{(t)}+\mathbb{D}^{1,2}_{i,j+1/2} \dfrac{f_{i,j+1}\textcolor{black}{(t)}-f_{i,j}\textcolor{black}{(t)}}{\Delta w}+\mathbb{D}^{1,1}_{i,j+1/2}[\partial_{x}f]_{i,j}\textcolor{black}{(t)}, \\
\mathcal{F}^{y}_{i+1/2,j}\textcolor{black}{(t)} &=\tilde{\mathcal{C}}^{y}_{i+1/2,j}\textcolor{black}{(t)}\tilde{f}_{i+1/2,j}\textcolor{black}{(t)}+\mathbb{D}^{2,1}_{i+1/2,j} \dfrac{f_{i+1,j}\textcolor{black}{(t)}-f_{i,j}\textcolor{black}{(t)}}{\Delta w}+\mathbb{D}^{2,2}_{i+1/2,j}[\partial_{y}f]_{i,j}\textcolor{black}{(t)}.
\end{split}
\end{equation}
By making the latter vanish, i.e. $\mathcal{F}^{x}_{i,j+1/2}\textcolor{black}{(t)} = 0$ and $\mathcal{F}^{y}_{i+1/2,j}\textcolor{black}{(t)} = 0$, we find the following numerical approximations in $w_{i,j}$ of the partial derivatives $[\partial_{y}f]_{i,j}\textcolor{black}{(t)}$ and $[\partial_{x}f]_{i,j}\textcolor{black}{(t)}$ in the complementary direction with respect to the one of the differentiation
\begin{equation}\label{eq:syst1.r}
[\partial_{y}f]_{i,j}\textcolor{black}{(t)}=-\dfrac{1}{\mathbb{D}^{2,2}_{i+1/2,j}}\left[ \tilde{\mathcal{C}}^{y}_{i+1/2,j}\textcolor{black}{(t)}\tilde{f}_{i+1/2,j}\textcolor{black}{(t)}+\mathbb{D}^{2,1}_{i+1/2,j} \dfrac{f_{i+1,j}\textcolor{black}{(t)}-f_{i,j}\textcolor{black}{(t)}}{\Delta w}\right],
\end{equation}
and
\begin{equation}\label{eq:syst2.r}
[\partial_{x}f]_{i,j}\textcolor{black}{(t)}=-\dfrac{1}{\mathbb{D}^{1,1}_{i,j+1/2}}\left[ \tilde{\mathcal{C}}^{x}_{i,j+1/2}\textcolor{black}{(t)}\tilde{f}_{i,j+1/2}\textcolor{black}{(t)}+\mathbb{D}^{1,2}_{i,j+1/2} \dfrac{f_{i,j+1}\textcolor{black}{(t)}-f_{i,j}\textcolor{black}{(t)}}{\Delta w}\right],
\end{equation}
where $\tilde{f}_{i+1/2,j}\textcolor{black}{(t)}, \tilde{f}_{i,j+1/2}\textcolor{black}{(t)}$ are given by \eqref{f.tilde}.
By substituting \eqref{eq:syst1.r} and \eqref{eq:syst2.r} in Eq \eqref{eq:fluxes_partial} we obtain
\begin{subequations} \label{eq:flux_th}
\begin{equation}\label{eq:flux_th.a}
\mathcal{F}^{x}_{i+1/2,j}\textcolor{black}{(t)} = \tilde{\mathcal{G}}^{x}_{i+1/2,j}\textcolor{black}{(t)} \tilde f_{i+1/2,j}\textcolor{black}{(t)} + \mathcal D^1_{i+1/2,j} \dfrac{f_{i+1,j}\textcolor{black}{(t)}-f_{i,j}\textcolor{black}{(t)}}{\Delta w},
\end{equation}\\
\begin{equation}\label{eq:flux_th.b}
\mathcal{F}^{y}_{i,j+1/2}\textcolor{black}{(t)} = \tilde{\mathcal{G}}^{y}_{i,j+1/2}\textcolor{black}{(t)} \tilde f_{i,j+1/2}\textcolor{black}{(t)} + \mathcal D^2_{i,j+1/2} \dfrac{f_{i,j+1}\textcolor{black}{(t)}-f_{i,j}\textcolor{black}{(t)}}{\Delta w},
\end{equation}
\end{subequations}
where $\tilde f_{i+1/2,j}\textcolor{black}{(t)}$, $\tilde f_{i,j+1/2}\textcolor{black}{(t)}$ are expressed as in \eqref{f.tilde} and
\begin{equation}\label{Ltilde}
\begin{split}
\tilde{\mathcal{G}}^{x}_{i+1/2,j}\textcolor{black}{(t)}=\tilde{\mathcal{C}}^{x}_{i+1/2,j}\textcolor{black}{(t)}-\dfrac{\mathbb{D}^{1,2}_{i+1/2,j}}{\mathbb{D}^{2,2}_{i+1/2,j}}\tilde{\mathcal{C}}^{y}_{i+1/2,j}\textcolor{black}{(t)}, \\
\tilde{\mathcal{G}}^{y}_{i,j+1/2}\textcolor{black}{(t)}=\tilde{\mathcal{C}}^{y}_{i,j+1/2}\textcolor{black}{(t)}-\dfrac{\mathbb{D}^{2,1}_{i,j+1/2}}{\mathbb{D}^{1,1}_{i,j+1/2}}\tilde{\mathcal{C}}^{x}_{i,j+1/2}\textcolor{black}{(t)}. 
\end{split}
\end{equation}
We shall now equate to zero the two components of the numerical flux \eqref{eq:flux_th}. By setting  \eqref{eq:flux_th.a} to zero, where $\tilde{f}_{i+1/2,j}\textcolor{black}{(t)}$ is defined as in \eqref{f.tilde} and $\tilde{\mathcal{G}}^{x}_{i+1/2,j}\textcolor{black}{(t)}$ as in \eqref{Ltilde}, we obtain 
\[
f_{i+1,j}\textcolor{black}{(t)}(1-\delta_{i+1/2,j}\textcolor{black}{(t)})\tilde{\mathcal{G}}^{x}_{i+1/2,j}\textcolor{black}{(t)}+\dfrac{\mathcal D^1_{i+1/2,j}}{\Delta w}+f_{i,j}\textcolor{black}{(t)}\delta_{i+1/2,j}\textcolor{black}{(t)}\tilde{\mathcal{G}}^{x}_{i+1/2,j}\textcolor{black}{(t)}+\dfrac{\mathcal D^1_{i+1/2,j}}{\Delta w}=0
\]
and, therefore
\begin{equation}\label{num.steady.state.1}
\dfrac{f_{i+1,j}\textcolor{black}{(t)}}{f_{i,j}\textcolor{black}{(t)}}=\dfrac{-\delta_{i+1/2,j}\textcolor{black}{(t)}\tilde{\mathcal{G}}^{x}_{i+1/2,j}\textcolor{black}{(t)}+\dfrac{\mathcal D^1_{i+1/2,j}}{\Delta w}}{(1-\delta_{i+1/2,j}\textcolor{black}{(t)})\tilde{\mathcal{G}}^{x}_{i+1/2,j}\textcolor{black}{(t)}+\dfrac{\mathcal D^1_{i+1/2,j}}{\Delta w}}.
\end{equation}
Analogously, equating \eqref{eq:flux_th.b} to zero gives 
\begin{equation}\label{num.steady.state.2}
\dfrac{f_{i,j+1}\textcolor{black}{(t)}}{f_{i,j}\textcolor{black}{(t)}}=\dfrac{-\delta_{i,j+1/2}\textcolor{black}{(t)}\tilde{\mathcal{G}}^{y}_{i,j+1/2}\textcolor{black}{(t)}+\dfrac{\mathcal D^2_{i,j+1/2}}{\Delta w}}{(1-\delta_{i,j+1/2}\textcolor{black}{(t)})\tilde{\mathcal{G}}^{y}_{i,j+1/2}\textcolor{black}{(t)}+\dfrac{\mathcal D^2_{i,j+1/2}}{\Delta w}},
\end{equation}
where, as a consequence of the definition \eqref{D.call}, we have
\[
\begin{split}
\mathcal{D}^1_{i+1/2,j}=\mathbb{D}^{1,1}(w_{i+1/2,j})-\dfrac{\mathbb{D}^{1,2}(w_{i+1/2,j})\mathbb{D}^{2,1}(w_{i+1/2,j})}{\mathbb{D}^{2,2}(w_{i+1/2,j})}>0,\\
\mathcal{D}^2_{i,j+1/2}=\mathbb{D}^{2,2}(w_{i,j+1/2})-\dfrac{\mathbb{D}^{1,2}(w_{i,j+1/2})\mathbb{D}^{2,1}(w_{i,j+1/2})}{\mathbb{D}^{1,1}(w_{i,j+1/2})}>0.
\end{split}
\]

We now need to define suitable weight functions $\delta_{i+1/2,j}\textcolor{black}{(t)}$, $\delta_{i,j+1/2}\textcolor{black}{(t)}$ and numerical drifts $\tilde{\mathcal C}^{x}\textcolor{black}{(w,t)}$, $\tilde{\mathcal C}^{y}\textcolor{black}{(w,t)}$ so that the method preserves the steady state of the problem with arbitrary accuracy and so that its numerical solution defines nonnegative solutions without additional restrictions on the grid $\Delta w$. 
By equating analytical and the numerical form of the flux, i.e. $f(w_{i+1,j},t)/f(w_{i,j},t)$ in \eqref{an.steady.state.1} with $f_{i+1,j}\textcolor{black}{(t)}/f_{i,j}\textcolor{black}{(t)}$ in \eqref{num.steady.state.1}, and $f(w_{i,j+1},t)/f(w_{i,j},t)$ in \eqref{an.steady.state.2} with $f_{i,j+1}\textcolor{black}{(t)}/f_{i,j}\textcolor{black}{(t)}$ in \eqref{num.steady.state.2}, and setting
\[
\begin{split}
\tilde{\mathcal{C}}^{x}_{i+1/2,j}\textcolor{black}{(t)}   &= \dfrac{\mathcal D^1_{i+1/2,j}}{\Delta w}\displaystyle\int_{w_{i,j}}^{w_{i+1,j}} \dfrac{\mathcal{C}^x(w,t)}{\mathcal{D}^1(w)} \, dw_x, \\
\tilde{\mathcal{C}}^{y}_{i+1/2,j}\textcolor{black}{(t)}   &=\dfrac{\mathcal D^1_{i+1/2,j}}{\Delta w} \displaystyle\int_{w_{i,j}}^{w_{i+1,j}} \dfrac{\mathcal{C}^y(w,t)}{\mathcal{D}^1(w)} \, dw_x,
\end{split}
\]
and 
\[
\begin{split}
\tilde{\mathcal{C}}^{x}_{i,j+1/2}\textcolor{black}{(t)}  & = \dfrac{\mathcal D^2_{i,j+1/2}}{\Delta w}  \displaystyle\int_{w_{i,j}}^{w_{i,j+1}} \dfrac{\mathcal{C}^x(w,t)}{\mathcal{D}^2(w)} \, dw_y,\\
\tilde{\mathcal{C}}^{y}_{i,j+1/2}\textcolor{black}{(t)}  &=\dfrac{\mathcal D^2_{i,j+1/2}}{\Delta w}\displaystyle\int_{w_{i,j}}^{w_{i,j+1}} \dfrac{\mathcal{C}^y(w,t)}{\mathcal{D}^2(w)}\, dw_y,
\end{split}
\]
we finally get 
\begin{equation}\label{delta}
\begin{split}
\delta_{i+1/2,j}\textcolor{black}{(t)}  =\dfrac{1}{\lambda_{i+1/2,j}\textcolor{black}{(t)}}+\dfrac{1}{1-\exp (\lambda_{i+1/2,j}\textcolor{black}{(t)})}, \qquad  \delta_{i,j+1/2}\textcolor{black}{(t)} =\dfrac{1}{\lambda_{i,j+1/2}\textcolor{black}{(t)}}+\dfrac{1}{1-\exp (\lambda_{i,j+1/2}\textcolor{black}{(t)})},
\end{split}
\end{equation}
where
\begin{equation}\label{lambda}
\begin{split}
\lambda_{i+1/2,j}\textcolor{black}{(t)}= \displaystyle\int_{w_{i,j}}^{w_{i+1,j}}\dfrac{1}{\mathcal D^1(w)} \left( \mathcal{C}^x(w,t)- \dfrac{\mathbb{D}^{1,2}\textcolor{black}{(w)}}{\mathbb{D}^{2,2}\textcolor{black}{(w)}}\mathcal{C}^y(w,t)\right) \,dw_x=\dfrac{\Delta w}{\mathcal D^1_{i+1/2,j}} \tilde{\mathcal{G}}^{x}_{i+1/2,j}\textcolor{black}{(t)},\\
\lambda_{i,j+1/2}\textcolor{black}{(t)}=\displaystyle\int_{w_{i,j}}^{w_{i,j+1}}\dfrac{1}{\mathcal D^2(w)} \left( \mathcal{C}^y(w,t)-\dfrac{\mathbb{D}^{2,1}\textcolor{black}{(w)}}{\mathbb{D}^{1,1}\textcolor{black}{(w)}}\mathcal{C}^x(w,t) \right) \, dw_y=\dfrac{\Delta w}{\mathcal D^2_{i,j+1/2}} \tilde{\mathcal{G}}^{y}_{i,j+1/2}\textcolor{black}{(t)}.
\end{split}
\end{equation}

We have the following result

\begin{theorem}\label{th1}
The numerical flux defined by \eqref{eq:fluxes_partial} with \eqref{eq:syst1.r}-\eqref{eq:syst2.r} is given by \eqref{eq:flux_th}
with $\tilde{\mathcal{G}}^{x}_{i+1/2,j}\textcolor{black}{(t)}$, $\tilde{\mathcal{G}}^{y}_{i,j+1/2}\textcolor{black}{(t)}$ defined in \eqref{Ltilde} and with $\delta_{i+1/2,j}\textcolor{black}{(t)}$, $\delta_{i,j+1/2}\textcolor{black}{(t)}$ defined in \eqref{delta}. The  numerical flux \eqref{eq:flux_th} vanishes when the flux \eqref{eq:Fx}-\eqref{eq:Fy} \textcolor{black}{vanishes} over the cell $[w_{i,j},w_{i+1,j}] \times [w_{i,j},w_{i,j+1}]$. The nonlinear weights defined in \eqref{delta}-\eqref{lambda} are such that $\delta_{i\pm 1/2,j}\textcolor{black}{(t)} \in (0,1)$, $\delta_{i,j\pm 1/2}\textcolor{black}{(t)}\in(0,1)$. 
\end{theorem}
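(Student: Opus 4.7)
The statement is actually three assertions, and I would prove them in the order they appear.

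First, for the equivalence of \eqref{eq:fluxes_partial}--\eqref{eq:syst1.r}--\eqref{eq:syst2.r} with \eqref{eq:flux_th}, the plan is to substitute \eqref{eq:syst1.r} directly into \eqref{eq:fluxes_partial.a} and collect terms. Since $[\partial_y f]_{i,j}$ is a linear combination of $\tilde f_{i+1/2,j}$ and the difference $(f_{i+1,j}-f_{i,j})/\Delta w$ with coefficients involving only $\tilde{\mathcal{C}}^y_{i+1/2,j}$ and the diffusion entries, factoring these two discrete objects produces coefficients that match precisely the definitions \eqref{Ltilde} of $\tilde{\mathcal{G}}^x_{i+1/2,j}$ and \eqref{D.call} of $\mathcal{D}^1_{i+1/2,j}$, sampled at $w_{i+1/2,j}$. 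The y-component is handled symmetrically.

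Second, to prove that the numerical flux vanishes precisely when the analytical flux does, I would impose $\mathcal{F}^{x}_{i+1/2,j}=0$ in \eqref{eq:flux_th.a} and use \eqref{f.tilde} to solve explicitly for the ratio $f_{i+1,j}/f_{i,j}$, which produces \eqref{num.steady.state.1}. Setting $\lambda := \Delta w \, \tilde{\mathcal{G}}^x_{i+1/2,j}/\mathcal{D}^1_{i+1/2,j}$ as in \eqref{lambda}, the vanishing-flux condition from the analytical side \eqref{an.steady.state.1} reads $f_{i+1,j}/f_{i,j} = e^{-\lambda}$. Equating the two expressions and solving for $\delta$ is a short manipulation: cross-multiplying gives $-\delta + \lambda^{-1} = e^{-\lambda}[(1-\delta) + \lambda^{-1}]$, and isolating $\delta$ yields exactly $\delta = 1/\lambda + 1/(1-e^\lambda)$, which is \eqref{delta}. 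The y-component follows by the analogous computation. This chain of identities shows that with this choice of $\delta$, the numerical flux is zero whenever $\mathcal{C}^x/\mathcal{D}^1$ integrates to $\lambda$ over the cell and the analytical ratio is $e^{-\lambda}$, which is the vanishing condition for the continuous flux rewritten via \eqref{sys.1}.

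Third, to verify $\delta_{i+1/2,j},\delta_{i,j+1/2}\in(0,1)$, it suffices to study the scalar function $\delta(\lambda) = 1/\lambda + 1/(1-e^\lambda)$ for $\lambda\in\mathbb{R}\setminus\{0\}$ (with removable singularity $\delta(0)=1/2$ by Taylor expansion). For $\lambda>0$, positivity is equivalent to $e^\lambda - 1 > \lambda$ and the bound $\delta<1$ reduces to $1-e^{-\lambda} < \lambda$; both are standard strict convexity inequalities for the exponential. For $\lambda<0$, I would exploit the symmetry $\delta(-\lambda) = 1-\delta(\lambda)$, which is easily checked by writing $1/(1-e^{-\lambda}) = 1 + 1/(e^\lambda - 1)$, so that the bounds on the positive side transfer immediately.

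The only mildly delicate step is the algebraic inversion in Part 2 that produces the specific closed form \eqref{delta}; everything else is substitution or a one-line inequality. I would therefore present the proof as a short computation for Part 1, the inversion-and-match argument for Part 2, and the two-case inequality analysis for Part 3.
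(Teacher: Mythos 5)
Your proposal is correct and follows essentially the same route as the paper: the paper's proof is just a terse pointer to the Section~2.2 computations (substituting \eqref{eq:syst1.r}--\eqref{eq:syst2.r} into \eqref{eq:fluxes_partial}, then matching \eqref{num.steady.state.1}--\eqref{num.steady.state.2} with \eqref{an.steady.state.1}--\eqref{an.steady.state.2} to obtain \eqref{delta}), plus the inequality $e^{x}\ge 1+x$ for the bounds on the weights, which is exactly what you carry out in detail (your symmetry $\delta(-\lambda)=1-\delta(\lambda)$ is just a tidy repackaging of that inequality). The only cosmetic caveat is that the theorem claims one implication (numerical flux vanishes \emph{when} the exact flux vanishes on the cell), so your word ``precisely'' slightly overstates what needs proving, though your argument does establish the required direction.
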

\begin{proof}
The form of the flux comes from the computations present in this section. 
If we equate \eqref{eq:fluxes_partial} to zero we can guarantee that the exact flux vanishes in the derived numerical approximation in the case where the components of the analytical flux vanish in the presence of a steady state. Finally, the latter result follows from the inequality $\exp(x) \ge 1+x$. 
\end{proof}

\begin{remark}
We can observe that for $\lambda_{i+1/2,j}\textcolor{black}{(t)}\ll 1$ and  $\lambda_{i,j+1/2}\textcolor{black}{(t)}\ll 1$ we have
\[
\delta_{i+1/2,j}\textcolor{black}{(t)} = \dfrac{1}{2} + O(\lambda_{i+1/2,j}\textcolor{black}{(t)}), \delta_{i,j+1/2}\textcolor{black}{(t)} = \dfrac{1}{2} + O(\lambda_{i,j+1/2}\textcolor{black}{(t)}),
\]
and, therefore, when $\lambda_{i+1/2,j}\textcolor{black}{(t)}=\lambda_{i,j+1/2}\textcolor{black}{(t)}=0$, we have that $\delta_{i+1/2,j}\textcolor{black}{(t)}=\textcolor{black}{\delta_{i,j+1/2}}\textcolor{black}{(t)}=\dfrac{1}{2}$. 
\end{remark}

\textcolor{black}{In the scheme defined by the numerical fluxes \eqref{eq:flux_th}
with $\tilde{\mathcal{G}}^{x}_{i+1/2,j}\textcolor{black}{(t)}$ and $\tilde{\mathcal{G}}^{y}_{i,j+1/2}\textcolor{black}{(t)}$ defined in \eqref{Ltilde} and with $\delta_{i+1/2,j}\textcolor{black}{(t)}$, $\delta_{i,j+1/2}\textcolor{black}{(t)}$ defined by \eqref{delta}-\eqref{lambda}, we shall approximate the integrals appearing in the nonlinear weights \eqref{lambda} through high order quadrature rules, as done in \cite{Pareschi_Zanella}.
In fact, it is worth observing that the derived scheme may be seen as a generalization of the classical second-order Chang-Cooper scheme \cite{CC,LLPS} to \textcolor{black}{Fokker-Planck equations with general diffusion matrix}.} In their original formulation, these works focused on linear Fokker-Planck equations with diagonal diffusion matrix and a recent generalization to the nonlinear case has been proposed in \cite{Pareschi_Zanella}. We highlight how the scheme proposed in \cite{Pareschi_Zanella} and the present scheme are coherent to the original one by approximating the functions  \eqref{lambda} through a midpoint quadrature rule as follows
\begin{equation*}
\begin{split}
&\lambda_{i+1/2,j}^{\textrm{mid}}\textcolor{black}{(t)} = \displaystyle\int_{w_{i,j}}^{w_{i+1,j}}\dfrac{1}{\mathcal D^1(w)} \left(\mathcal{C}^x(w,t)- \dfrac{\mathbb{D}^{1,2}\textcolor{black}{(w)}}{\mathbb{D}^{2,2}\textcolor{black}{(w)}}\mathcal{C}^y(w,t)\right) \,dw_x  \\
&\qquad\quad\;=  \dfrac{\Delta w}{\mathcal D^1_{i+1/2,j}}  \left(\mathcal{C}^{x}_{i+1/2,j}-\dfrac{\mathbb{D}^{1,2}_{i+1/2,j}}{\mathbb{D}^{2,2}_{i+1/2,j}}\mathcal{C}^{y}_{i+1/2,j}\right), \\
&\lambda_{i,j+1/2}^{\textrm{mid}}\textcolor{black}{(t)} = \displaystyle\int_{w_{i,j}}^{w_{i,j+1}} \dfrac{1}{\mathcal D^2(w)} \left(\mathcal{C}^y(w,t)-\dfrac{\mathbb{D}^{2,1}\textcolor{black}{(w)}}{\mathbb{D}^{1,1}\textcolor{black}{(w)}}\mathcal{C}^x(w,t)\right) \, dw_y  \\
&\qquad\quad\;= \dfrac{\Delta w}{\mathcal D^2_{i,j+1/2}}\left(\mathcal{C}^{y}_{i,j+1/2}-\dfrac{\mathbb{D}^{2,1}_{i,j+1/2}}{\mathbb{D}^{1,1}_{i,j+1/2}} \mathcal{C}^{x}_{i,j+1/2}\right),
\end{split}
\end{equation*}
leading to the following weights
\[
\begin{split}
\delta_{i+1/2,j}^{\textrm{mid}}\textcolor{black}{(t)} &=\dfrac{\mathcal D^1_{i+1/2,j}}{\Delta w\left(\mathcal{C}^x_{i+1/2,j}\textcolor{black}{(t)}-\dfrac{\mathbb{D}^{1,2}_{i+1/2,j}}{\mathbb{D}^{2,2}_{i+1/2,j}}\mathcal{C}^y_{i+1/2,j}\textcolor{black}{(t)}\right)}+\dfrac{1}{1-\exp (\lambda^{\textrm{mid}}_{i+1/2,j}\textcolor{black}{(t)})}, \\ 
\delta_{i,j+1/2}^{\textrm{mid}}\textcolor{black}{(t)} &=\dfrac{\mathcal D^2_{i,j+1/2}}{\Delta w\left(\mathcal{C}^y_{i,j+1/2}\textcolor{black}{(t)}-\dfrac{\mathbb{D}^{2,1}_{i,j+1/2}}{\mathbb{D}^{1,1}_{i,j+1/2}}\mathcal{C}^x_{i,j+1/2}\textcolor{black}{(t)}\right)}+\dfrac{1}{1-\exp (\lambda^{\textrm{mid}}_{i,j+1/2}\textcolor{black}{(t)})}.
\end{split}
\]
Hence, in the case $\mathbb D^{1,2}\textcolor{black}{(w)} = \mathbb D^{2,1}\textcolor{black}{(w)} = 0$ we recover the classical formulation. Furthermore, we observe that if $\mathcal{B}[f](w,t)$ does not depend on $f\textcolor{black}{(w,t)}$ and 
 has components which are first order polynomials, the midpoint rule gives an exact evaluation of the integrals in \eqref{lambda}. \textcolor{black}{More generally, to extend the introduced approach as in \cite{Pareschi_Zanella} we may consider standard high order quadrature rules for the computation of the nonlinear weights \eqref{lambda}, see e.g. \cite{Dahl}. }

\begin{remark}
\textcolor{black}{In the case $\mathcal{B}[f](w,t)=B(w)$ the quasi-stationary formulation \eqref{eq:conditions_SS} becomes stationary, because $\mathcal{C}\textcolor{black}{(w)}$ does not depend on $f\textcolor{black}{(w,t)}$ anymore.} Once we know the stationary state $f^{\infty}(w)$, we can compute the weights $\delta_{i+1/2,j}\textcolor{black}{(t)}$, $\delta_{i,j+1/2}\textcolor{black}{(t)}$ exactly. In fact, we have
\[
\begin{split}
\displaystyle \dfrac{f^{\infty}_{i+1,j}}{f^{\infty}_{i,j}} &=\exp \left\{ - \displaystyle\int_{w_{i,j}}^{w_{i+1,j}} \dfrac{1}{\mathcal D^1(w)}\left(\mathcal{C}^x(w,t)- \dfrac{\mathbb{D}^{1,2}\textcolor{black}{(w)}}{\mathbb{D}^{2,2}\textcolor{black}{(w)}}\mathcal{C}^y(w,t)\right) \,dw_x\right\}  \\
&=\exp\left\{-\lambda^{\infty}_{i+1/2,j}\right\} \\
\displaystyle \dfrac{f^{\infty}_{i,j+1}}{f^{\infty}_{i,j}} &=\exp \left\{ - \displaystyle\int_{w_{i,j}}^{w_{i,j+1}} \dfrac{1}{\mathcal D^2(w)} \left(\mathcal{C}^y(w,t)-\dfrac{\mathbb{D}^{2,1}\textcolor{black}{(w)}}{\mathbb{D}^{1,1}\textcolor{black}{(w)}}\mathcal{C}^x(w,t)\right) \, dw_y\right\} \\
& =\exp\left\{-\lambda^{\infty}_{i,j+1/2}\right\},
\end{split}
\]
that define the following weights
\begin{equation}\label{delta.inf}
\begin{split}
\delta_{i+1/2,j}^\infty &=\dfrac{1}{\log f^{\infty}_{i,j} -\log f^{\infty}_{i+1,j}}+\dfrac{f^{\infty}_{i+1,j}}{f^{\infty}_{i+1,j}-f^{\infty}_{i,j}},\\
\delta_{i,j+1/2}^\infty &=\dfrac{1}{\log f^{\infty}_{i,j} -\log f^{\infty}_{i,j+1}}+\dfrac{f^{\infty}_{i,j+1}}{f^{\infty}_{i,j+1}-f^{\infty}_{i,j}}.
\end{split}
\end{equation}
\textcolor{black}{Using classical methods, as it is done for example in \cite{MB} for the linear Fokker-Planck equation with diagonal diffusion matrix, we can prove that the proposed scheme is consistent if the problem is linear and the flux vanishes at the steady state. In particular, using the same arguments of \cite{MB}, it is possible to see that the stationary state is approximated with an order equal to the order of the quadrature rule. This is cannot be proved for problems whose stationary state does not make the flux vanish (see Test2).}
\end{remark}

 \begin{remark}
 If we consider the limit case in which the diffusion tensor tends to be singular and the elements of $\nabla \cdot \mathbb{D}$ tend to vanish, we obtain
\[
\delta_{i+1/2,j}\textcolor{black}{(t)}=\\
\begin{cases}
0, \qquad \mathcal{B}_{i+1/2,j}\textcolor{black}{(t)}>0,\\
1, \qquad \mathcal{B}_{i+1/2,j}\textcolor{black}{(t)}<0,
\end{cases}
\qquad
\delta_{i,j+1/2}\textcolor{black}{(t)} = 
\begin{cases}
0 \qquad \mathcal B_{i,j+1/2}\textcolor{black}{(t)}>0, \\
1 \qquad \mathcal B_{i,j+1/2}\textcolor{black}{(t)}<0.
\end{cases}
\]
Therefore the scheme reduces to a first order upwind scheme. 
\end{remark}

\section{Main properties}\label{sect:prop}
In this section we show the properties of the derived numerical schemes. In particular, we will prove how the present method enforces conservation of mass, nonnegativity of the numerical solution and correctly dissipates the entropy. 

\textcolor{black}{
\subsection{Conservation of mass}
We notice that the no-flux boundary conditions
\[
\mathcal{F}\textcolor{black}{(w,t)}\cdot \boldsymbol{n}(w)=0, \quad w \in \partial \Omega
\]
ensure the conservation of mass in the problem \eqref{FP1} since 
$$\int_{\Omega} f(w,t) dw= \int_{\Omega} f_0(w) \, dw$$
for all time $t \ge 0$. At the numerical level, no-flux boundary conditions are obtained by imposing
\begin{equation}\label{no_flux.num}
\mathcal{F}^{x}_{N+1/2,j}\textcolor{black}{(t)} = \mathcal{F}^{x}_{-1/2,j}\textcolor{black}{(t)} = 0,\quad  \textrm{and} \quad \mathcal{F}^{y}_{i,N+1/2}\textcolor{black}{(t)} = \mathcal{F}^{y}_{i,-1/2}\textcolor{black}{(t)} = 0, \quad \forall i,j = 0,\dots,N, \quad t\ge 0, 
\end{equation}
and we can prove that the introduced scheme ensures the conservation of mass. 
\begin{lemma}
Let us consider a discretization of the problem \eqref{FP1} in the form \eqref{eq:cons_disc} complemented with no-flux boundary conditions \eqref{no_flux.num}. Then we have
\[
\dfrac{d}{dt} \sum_{i=0}^N \sum_{j=0}^N f_{i,j}(t) = 0.
\]
\end{lemma}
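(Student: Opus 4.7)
The plan is a direct telescoping argument: sum the semi-discrete conservation law \eqref{eq:cons_disc} over all grid nodes and exploit the fact that the discrete divergence form causes the interior flux contributions to cancel pairwise, leaving only boundary fluxes which are set to zero by \eqref{no_flux.num}.

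Concretely, I would start by summing \eqref{eq:cons_disc} over $i,j = 0,\dots,N$, moving the time derivative outside the sum by linearity, to obtain
\[
\frac{d}{dt}\sum_{i=0}^N\sum_{j=0}^N f_{i,j}(t) = \frac{1}{\Delta w}\sum_{i=0}^N\sum_{j=0}^N\Bigl(\mathcal{F}^{x}_{i+1/2,j}(t)-\mathcal{F}^{x}_{i-1/2,j}(t)\Bigr) + \frac{1}{\Delta w}\sum_{i=0}^N\sum_{j=0}^N\Bigl(\mathcal{F}^{y}_{i,j+1/2}(t)-\mathcal{F}^{y}_{i,j-1/2}(t)\Bigr).
\]
Then, for each fixed $j$, the inner sum $\sum_{i=0}^N\bigl(\mathcal{F}^{x}_{i+1/2,j}-\mathcal{F}^{x}_{i-1/2,j}\bigr)$ telescopes to $\mathcal{F}^{x}_{N+1/2,j}(t)-\mathcal{F}^{x}_{-1/2,j}(t)$, and symmetrically for the $y$-sums with $j$ varying. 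This reduces the whole right-hand side to a sum of discrete boundary fluxes.

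Finally, I would invoke the no-flux boundary conditions \eqref{no_flux.num}, which guarantee $\mathcal{F}^{x}_{\pm 1/2\ldots} = 0$ and $\mathcal{F}^{y}_{\ldots,\pm 1/2} = 0$ at the boundary indices for all $t\ge 0$, so each surviving term vanishes. Hence the total time derivative is zero, which gives the claimed conservation.

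Honestly, there is no real obstacle here: the argument is the standard discrete counterpart of $\int_\Omega \nabla\cdot\mathcal{F}\,dw = \int_{\partial\Omega}\mathcal{F}\cdot\mathbf{n}\,d\sigma$, and the only care needed is in making the summation indices and the telescoping explicit so that the match between interior ``flux in'' and ``flux out'' is clean.
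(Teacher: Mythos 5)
Your argument is correct and coincides with the paper's own proof: summing \eqref{eq:cons_disc} over $i,j$, telescoping each directional sum to the boundary fluxes, and invoking \eqref{no_flux.num} is exactly what the paper does. No gaps.
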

\begin{proof}
From \eqref{eq:cons_disc} we have
\[\begin{split}
& \sum_{i=0}^N \sum_{j=0}^N \dfrac{d}{dt} f_{i,j}\textcolor{black}{(t)} =\dfrac{1}{\Delta w} \sum_{j=0}^N \left( \mathcal{F}^{x}_{-1/2,j}\textcolor{black}{(t)}-\mathcal{F}^{x}_{N+1/2,j}\textcolor{black}{(t)} \right) +  \dfrac{1}{\Delta w} \sum_{i=0}^N \left( \mathcal{F}^{y}_{i,-1/2}\textcolor{black}{(t)}-\mathcal{F}^{y}_{j,N+1/2}\textcolor{black}{(t)} \right),
\end{split}\]
from which we conclude using \eqref{no_flux.num}. 
\end{proof}}
\subsection{Positivity \textcolor{black}{ of the explicit scheme}}

In this section we will provide results for non-negativity of the scheme with explicit time integration. \textcolor{black}{We introduce the time discretization $t^n = n\Delta t$,  $n = 0,\dots,N_T$ with $\Delta t = T/N_T$ being $T$ the final time}. We first consider the simple forward Euler method
\[
f_{i,j}^{n+1}=f_{i,j}^n+\Delta t\dfrac{\mathcal{F}^{x,n}_{i+1/2,j}-\mathcal{F}^{x,n}_{i-1/2,j}}{\Delta w} +\Delta t \dfrac{\mathcal{F}^{y,n}_{i,j+1/2}-\mathcal{F}^{y,n}_{i,j-1/2}}{\Delta w},
\]
where \textcolor{black}{$f_{i,j}^n = f_{i,j}(t^n)$} and $\mathcal{F}^{x,n}_{i+1/2,j}, \mathcal{F}^{y,n}_{i,j+1/2}$ are the numerical fluxes at time $t^n$, \textcolor{black}{i.e $\mathcal{F}^{x,n}_{i+1/2,j}=\mathcal{F}^{x}_{i+1/2,j}(t^n)$, and $\mathcal{F}^{y,n}_{i,j+1/2}=\mathcal{F}^{y}_{i,j+1/2}(t^n)$.}

We can prove the following result
\begin{theorem}
Under the time step restriction
\begin{equation}\label{pos.eu}
\Delta t \leq \dfrac{\Delta w^2}{2\left[(G_x + G_y)\Delta w+(D^1 + D^2)\right]}
\end{equation}
where 
$$
G_x=\max_{i,j,n} |\tilde{\mathcal{G}}^{x,n}_{i+1/2,j}|, \qquad G_y=\max_{i,j,n}|\tilde{\mathcal{G}}^{y,n}_{i,j+1/2}|,
$$
and 
$$D^1=\max_{i,j} \mathcal D^1_{i+1/2,j}, \qquad D^2=\max_{i,j} \mathcal D^2_{i,j+1/2}, $$ 
the explicit scheme preserves nonnegativity, $\ie$ $f_{i,j}^{n+1}\geq 0$ if $f_{i,j}^n \geq 0$.
\end{theorem}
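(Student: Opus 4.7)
The plan is to expand the forward Euler update $f_{i,j}^{n+1}$ explicitly as a linear combination of the five grid values $f_{i,j}^n, f_{i\pm 1,j}^n, f_{i,j\pm 1}^n$, and then to verify separately that (i) the four off-diagonal coefficients are nonnegative with no restriction on $\Delta t$ or $\Delta w$, and (ii) the diagonal coefficient is nonnegative under the stated CFL bound \eqref{pos.eu}.

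First I would substitute $\tilde f_{i+1/2,j}^n = (1-\delta_{i+1/2,j}^n) f_{i+1,j}^n + \delta_{i+1/2,j}^n f_{i,j}^n$, together with the three analogous expressions at $i-1/2$ and $j\pm 1/2$, into the structure-preserving fluxes \eqref{eq:flux_th.a}--\eqref{eq:flux_th.b}, and then into the Euler step. This produces
\[
f_{i,j}^{n+1} = c_0^n f_{i,j}^n + c_E^n f_{i+1,j}^n + c_W^n f_{i-1,j}^n + c_N^n f_{i,j+1}^n + c_S^n f_{i,j-1}^n,
\]
in which, for instance,
\[
c_E^n = \frac{\Delta t}{\Delta w}\Big[(1-\delta_{i+1/2,j}^n)\tilde{\mathcal G}^{x,n}_{i+1/2,j} + \frac{\mathcal D^1_{i+1/2,j}}{\Delta w}\Big], \qquad c_W^n = \frac{\Delta t}{\Delta w}\Big[\frac{\mathcal D^1_{i-1/2,j}}{\Delta w} - \delta_{i-1/2,j}^n \tilde{\mathcal G}^{x,n}_{i-1/2,j}\Big],
\]
and $c_N^n, c_S^n$ have the symmetric $y$-form.

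The crucial nontrivial step is to show $c_E^n, c_W^n, c_N^n, c_S^n \geq 0$ unconditionally in $\Delta t$ and $\Delta w$. Setting $\lambda := \lambda_{i+1/2,j}^n = \Delta w\, \tilde{\mathcal G}^{x,n}_{i+1/2,j}/\mathcal D^1_{i+1/2,j}$ as in \eqref{lambda} and using the explicit form \eqref{delta} of the weight $\delta_{i+1/2,j}^n$, a direct computation produces the two ``Bernoulli-type'' identities
\[
(1-\delta_{i+1/2,j}^n)\tilde{\mathcal G}^{x,n}_{i+1/2,j} + \frac{\mathcal D^1_{i+1/2,j}}{\Delta w} = \frac{\mathcal D^1_{i+1/2,j}}{\Delta w}\cdot\frac{\lambda e^{\lambda}}{e^{\lambda}-1}, \qquad \frac{\mathcal D^1_{i+1/2,j}}{\Delta w} - \delta_{i+1/2,j}^n\tilde{\mathcal G}^{x,n}_{i+1/2,j} = \frac{\mathcal D^1_{i+1/2,j}}{\Delta w}\cdot\frac{\lambda}{e^{\lambda}-1}.
\]
Both right-hand sides are strictly positive for every real $\lambda$, so $c_E^n, c_W^n \geq 0$; the same identities applied with $\lambda_{i,j\pm 1/2}^n$ give $c_N^n, c_S^n \geq 0$. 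This is the main technical step: once these identities are in place the rest is bookkeeping.

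Finally I would examine $c_0^n$. Collecting signs from the Euler step yields $c_0^n = 1 - (\Delta t/\Delta w)\big(A_{i,j}^n + B_{i,j}^n\big)$, where
\[
A_{i,j}^n = \Big[\tfrac{\mathcal D^1_{i+1/2,j}}{\Delta w} - \delta_{i+1/2,j}^n\tilde{\mathcal G}^{x,n}_{i+1/2,j}\Big] + \Big[(1-\delta_{i-1/2,j}^n)\tilde{\mathcal G}^{x,n}_{i-1/2,j} + \tfrac{\mathcal D^1_{i-1/2,j}}{\Delta w}\Big],
\]
and $B_{i,j}^n$ is the symmetric $y$-sum. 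Both brackets in $A_{i,j}^n$ are nonnegative by the previous step, and using $\delta_{i\pm 1/2,j}^n,\, 1-\delta_{i\pm 1/2,j}^n \in (0,1)$ and the definitions of $G_x, D^1$ each bracket is bounded above by $G_x + D^1/\Delta w$. Hence $A_{i,j}^n \leq 2(G_x + D^1/\Delta w)$ and similarly $B_{i,j}^n \leq 2(G_y + D^2/\Delta w)$. Requiring $c_0^n \geq 0$ then reduces exactly to the condition \eqref{pos.eu}, and combining this with the unconditional nonnegativity of $c_E^n, c_W^n, c_N^n, c_S^n$ proves $f_{i,j}^{n+1} \geq 0$ whenever $f_{i,j}^n \geq 0$.
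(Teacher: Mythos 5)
Your proposal is correct and follows essentially the same route as the paper: write the forward Euler update as a five-point linear combination, show the four off-diagonal coefficients are nonnegative unconditionally from the explicit form of the weights (your Bernoulli-type identities $\frac{\lambda e^{\lambda}}{e^{\lambda}-1}$, $\frac{\lambda}{e^{\lambda}-1}$ are exactly the reduced expressions the paper declares nonnegative by the basic properties of the exponential), and then bound the diagonal coefficient using $\delta\in(0,1)$ and the definitions of $G_x,G_y,D^1,D^2$ to recover the CFL bound \eqref{pos.eu}. No gaps; the argument matches the paper's proof step for step.
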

\begin{proof}
We will adopt the structure of the scheme introduced in Theorem \ref{th1}. In details, the scheme reads
\[
\begin{split}
f_{i,j}^{n+1} &=f_{i,j}^n+\dfrac{\Delta t}{\Delta w}\left [ \left( \tilde{\mathcal{G}}^{x,n}_{i+1/2,j}(1-\delta_{i+1/2,j}^n)+\dfrac{\mathcal D^1_{i+1/2,j}}{\Delta w}\right)  f_{i+1,j}^n \right. \\ 
&- \left. \left(- \tilde{\mathcal{G}}^{x,n}_{i+1/2,j}\delta^n_{i+1/2,j}+ \tilde{\mathcal{G}}^{x,n}_{i-1/2,j}(1-\delta^n_{i-1/2,j})+\dfrac{\mathcal D^1_{i+1/2,j} + \mathcal D^1_{i-1/2,j}}{\Delta w} \right)  f_{i,j}^n \right.\\
&+ \left( - \tilde{\mathcal{G}}^{x,n}_{i-1/2,j}\delta^n_{i-1/2,j}+\dfrac{\mathcal D^1_{i-1/2,j}}{\Delta w}\right) f_{i-1,j}^n \Bigg]+\dfrac{\Delta t}{\Delta w}\left[ \left(  \tilde{\mathcal{G}}^{y,n}_{i,j+1/2}(1-\delta^n_{i,j+1/2})+\dfrac{\mathcal D^2_{i,j+1/2}}{\Delta w}\right) f_{i,j+1}^n  \right.  \\
&\left. - \left(- \tilde{\mathcal{G}}^{y,n}_{i,j+1/2}\delta^n_{i,j+1/2}+ \tilde{\mathcal{G}}^{y,n}_{i,j-1/2}(1-\delta^n_{i,j-1/2})+\dfrac{\mathcal D^2_{i,j+1/2} + \mathcal D^2_{i,j-1/2}}{\Delta w}\right) f_{i,j}^n \right. \\
&+\left( - \tilde{\mathcal{G}}^{y,n}_{i,j-1/2}\delta^n_{i,j-1/2}+\dfrac{\mathcal D^2_{i,j-1/2}}{\Delta w}\right) f_{i,j-1}^n \Bigg]. 
\end{split}
\]
This is a sum of convex combinations of $f_{i+1,j}^n$, $f_{i-1,j}^n$ and $f_{i,j+1}^n$,$f_{i,j-1}^n$ if the following conditions are satisfied 

\[
\begin{aligned}
& \tilde{\mathcal{G}}^{x,n}_{i+1/2,j}(1-\delta^n_{i+1/2,j})+\dfrac{\mathcal D^1_{i+1/2,j}}{\Delta w}\geq 0, \qquad
& - \tilde{\mathcal{G}}^{x,n}_{i-1/2,j}\delta^n_{i-1/2,j}+\dfrac{\mathcal D^1_{i-1/2,j}}{\Delta w} \geq 0, \\
& \tilde{\mathcal{G}}^{y,n}_{i,j+1/2}(1-\delta^n_{i,j+1/2})+\dfrac{\mathcal D^2_{i,j+1/2}}{\Delta w} \geq 0,
& - \tilde{\mathcal{G}}^{y,n}_{i,j-1/2}\delta^n_{i,j-1/2}+\dfrac{\mathcal D^2_{i,j-1/2}}{\Delta w}\geq 0 ,
\end{aligned}
\]
that is equivalent to
\[
\begin{aligned}
&  \lambda^n_{i+1/2,j}\left(1-\dfrac{1}{1-\exp (\lambda^n_{i+1/2,j})} \right) \geq 0, \qquad
& \dfrac{\lambda^n_{i-1/2,j}}{\exp (\lambda^n_{i-1/2,j})-1}  \geq 0, \\
& \lambda^n_{i,j+1/2}\left(1-\dfrac{1}{1-\exp (\lambda^n_{i,j+1/2})} \right) \geq 0, \qquad
& \dfrac{\lambda^n_{i,j-1/2}}{\exp (\lambda^n_{i,j-1/2})-1}  \geq 0,
\end{aligned}
\]
which hold true thanks to the basic properties of the exponential function. In order to ensure positivity for $f_{i,j}^{n+1}$ if $f_{i,j}^n \geq 0$ we must have for all $i,j$
\[
 \left( 1- (\nu_x+\nu_y) \dfrac{\Delta t}{\Delta w}\right) f_{i,j}^n \geq 0, 
\]
where 
\[
\begin{split}
\nu_x  &= \max_{i,j} \left\{-\tilde{\mathcal{G}}^{x,n}_{i+1/2,j}\delta^n_{i+1/2,j} + \tilde{\mathcal{G}}^{x,n}_{i-1/2,j}(1-\delta^n_{i-1/2,j})+\dfrac{\mathcal D^1_{i+1/2,j}+ \mathcal D^1_{i-1/2,j}}{\Delta w}\right\}, \\
\nu_y &= \max_{i,j} \left\{ -\tilde{\mathcal{G}}^{y,n}_{i,j+1/2}\delta^n_{i,j+1/2}+ \tilde{\mathcal{G}}^{y,n}_{i,j-1/2}(1-\delta^n_{i,j-1/2})+\dfrac{\mathcal D^2_{i,j+1/2} + \mathcal D^2_{i,j-1/2}}{\Delta w} \right\},
\end{split}
\]
from which we can conclude as $0\le\delta_{i\pm 1/2,j}\le 1$, $0\le \delta_{i,j\pm 1/2} \le 1$. 
\end{proof}

\textcolor{black}{We highlight how the CFL criterion in \eqref{pos.eu} ensures positivity of the numerical solution of the problem.} This remarkable property holds also for higher order strong stability preserving (SSP) methods like SSP Runge-Kutta and SSP multistep methods \cite{GST} since these are convex combinations of the forward Euler integration. Hence, the proved non-negativity of the scheme is automatically extended to each general SSP type time integration. \\

Even if in \eqref{pos.eu} we obtained an effective time step bound for the positivity of the explicit scheme, for practical purposes this parabolic restriction is very heavy especially in genuine nonlinear type problems. Usually the strategy to overcome this problem relies in the adoption of IMEX schemes \cite{DP}. Nevertheless, this is not always possible due to the strong nonlinearities embedded in problem \eqref{FP1} coming from the nonlocal drift term. \textcolor{black}{Furthermore}, the defined weights $\delta_{i+1/2,j},\delta_{i,j+1/2}$ depend in general on $f$ introducing additional difficulties. An efficient way to overcome this problem relies in the semi-implicit integration technique, see \cite{BFR}. 

\subsection{\textcolor{black}{Positivity  of the semi-implicit scheme}}
To apply semi-implicit schemes we integrate \eqref{eq:cons_disc} as follows
\begin{equation}\label{s.impl}
 f_{i,j}^{n+1} = f_{i,j}^n  + \Delta t \dfrac{\hat{\mathcal{F}}^{x,n+1}_{i+1/2,j}-\hat{\mathcal{F}}^{x,n+1}_{i-1/2,j}}{\Delta w}+\Delta t\dfrac{\hat{\mathcal{F}}^{y,n+1}_{i,j+1/2}-\hat{\mathcal{F}}^{y,n+1}_{i,j-1/2}}{\Delta w},
\end{equation}
where now the discretized flux terms $\hat {\mathcal F}^{x,n+1}_{i+1/2,j}$, $\hat{\mathcal F}^{y,n+1}_{i,j+ 1/2}$ are defined as
\[
\begin{split}
\hat{\mathcal{F}}^{x,n+1}_{i+1/2,j}=\tilde{\mathcal{G}}^{x,n}_{i+1/2,j}\left[(1-\delta^n_{i+1/2,j})f^{n+1}_{i+1,j}+ \delta^n_{i+1/2,j}f^{n+1}_{i,j}\right] +\mathcal D^1_{i+1/2,j}\dfrac{f^{n+1}_{i+1,j}-f^{n+1}_{i,j}}{\Delta w},\\[10pt]
\hat{\mathcal{F}}^{y,n+1}_{i,j+1/2}=\tilde{\mathcal{G}}^{y,n}_{i,j+1/2}\left[(1-\delta^n_{i,j+1/2})f^{n+1}_{i,j+1}+\delta^n_{i,j+1/2}f^{n+1}_{i,j}\right]+\mathcal D^2_{i,j+1/2}\dfrac{f^{n+1}_{i,j+1}-f^{n+1}_{i,j}}{\Delta w}.
\end{split}
\]

\begin{theorem}\label{prop3}
Under the time step restriction 
$$
\Delta t \leq \dfrac{\Delta w}{2(G_x + G_y)}, \qquad G_x = \max_{i,j,n}\{|\tilde{\mathcal{G}}^{x,n}_{i+1/2,j}| \}, \quad  G_y = \max_{i,j,n}\{|\tilde{\mathcal{G}}^{y,n}_{i,j+1/2}| \},
$$
the semi-implicit scheme (\ref{s.impl}) preserves nonnegativity, $\ie$, $f_{i,j}^{n+1}\geq 0$ if $f_{i,j}^n \geq 0$.
\end{theorem}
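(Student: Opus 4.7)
The plan is to recast the semi-implicit update \eqref{s.impl} as a linear system $A\,f^{n+1} = f^n$, in which $A$ depends only on frozen quantities at $t^n$, and then show that $A$ has non-positive off-diagonals, a strictly positive diagonal, and a non-negative row sum under the stated restriction. A discrete minimum principle---equivalently, $A^{-1}\ge 0$ for such an $M$-matrix---will then give $f^{n+1}\ge 0$ whenever $f^n\ge 0$.

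Concretely, I first rewrite $\hat{\mathcal F}^{x,n+1}_{i+1/2,j} = a^+_{i+1/2,j}\,f^{n+1}_{i+1,j} + a^-_{i+1/2,j}\,f^{n+1}_{i,j}$ with
\[
a^+_{i+1/2,j} = \tilde{\mathcal G}^{x,n}_{i+1/2,j}(1-\delta^n_{i+1/2,j}) + \dfrac{\mathcal D^1_{i+1/2,j}}{\Delta w}, \qquad a^-_{i+1/2,j} = \tilde{\mathcal G}^{x,n}_{i+1/2,j}\delta^n_{i+1/2,j} - \dfrac{\mathcal D^1_{i+1/2,j}}{\Delta w},
\]
and analogous quantities $b^\pm_{i,j\pm 1/2}$ for the $y$-flux. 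The same $\lambda/(1-e^{\pm\lambda})$-sign argument used in the explicit theorem---only the indexing has to be shifted---gives $a^+_{i+1/2,j}\ge 0$, $-a^-_{i+1/2,j}\ge 0$, and the analogous inequalities for $b^\pm$. Substituting into \eqref{s.impl} yields a five-point stencil on $f^{n+1}$ whose four off-diagonal coefficients $-\alpha\, a^+_{i+1/2,j}$, $\alpha\, a^-_{i-1/2,j}$, $-\alpha\, b^+_{i,j+1/2}$, $\alpha\, b^-_{i,j-1/2}$ (with $\alpha = \Delta t/\Delta w$) are all $\le 0$, while the diagonal coefficient $1+\alpha(a^+_{i-1/2,j} - a^-_{i+1/2,j} + b^+_{i,j-1/2} - b^-_{i,j+1/2})$ is at least $1$.

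The key step is the row-sum bound. The diffusion contributions in $a^+_{i+1/2,j}$ and $a^-_{i+1/2,j}$ cancel in their sum, leaving $a^+_{i+1/2,j} + a^-_{i+1/2,j} = \tilde{\mathcal G}^{x,n}_{i+1/2,j}$, and likewise for $b$. Hence the row sum of $A$ at $(i,j)$ collapses to $1 + \alpha\bigl(\tilde{\mathcal G}^{x,n}_{i-1/2,j} - \tilde{\mathcal G}^{x,n}_{i+1/2,j} + \tilde{\mathcal G}^{y,n}_{i,j-1/2} - \tilde{\mathcal G}^{y,n}_{i,j+1/2}\bigr)$, which by definition of $G_x$ and $G_y$ is bounded below by $1 - 2\alpha(G_x+G_y)$, and is $\ge 0$ precisely under the stated time-step restriction. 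Evaluating $A f^{n+1} = f^n$ at an index $(i^\star, j^\star)$ attaining $\min f^{n+1}$ and using the non-positivity of the off-diagonal entries yields $(\text{row sum})\cdot f^{n+1}_{i^\star,j^\star}\ge f^n_{i^\star,j^\star}\ge 0$, so $f^{n+1}_{i^\star,j^\star}\ge 0$ and the conclusion propagates to every node.

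The main point to watch is the diffusion cancellation in the row sum: it is precisely this cancellation that eliminates the parabolic $\Delta w^2$ contribution present in the explicit CFL \eqref{pos.eu} and leaves only the hyperbolic bound $\Delta t\le \Delta w/[2(G_x+G_y)]$. Boundary rows require no separate argument: the no-flux conditions \eqref{no_flux.num} remove one off-diagonal entry together with the corresponding outward contribution to the diagonal in a symmetric way, and the vanishing of $\mathbb{D}$ on $\partial\Omega$ from \eqref{eq:Diff_nulla} prevents any spurious negative term, so the sign pattern and the row-sum estimate persist verbatim at the boundary.
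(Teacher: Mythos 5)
Your proposal is correct and follows essentially the same route as the paper: freezing $\tilde{\mathcal{G}}$ and $\delta$ at time $t^n$, writing the update as a linear system whose off-diagonal entries are nonpositive and whose diagonal is at least $1$ (via the same sign facts on $\lambda/(e^{\lambda}-1)$ used in the explicit case), and exploiting the cancellation of the diffusive contributions so that the dominance condition reduces to the difference of the $\tilde{\mathcal{G}}$ terms, i.e.\ exactly to the stated restriction $\Delta t\le \Delta w/\bigl(2(G_x+G_y)\bigr)$. The only cosmetic difference is the closing step: the paper concludes by nonnegativity of the inverse of the diagonally dominant matrix, while you argue via a discrete minimum principle at the minimizing node --- the same M-matrix fact, sharing with the paper the same harmless borderline case when equality holds in the time-step bound.
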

\begin{proof}
Equation (\ref{s.impl}) corresponds to
\[
\begin{split}
&f_{i+1,j}^{n+1}\left\{- \dfrac{\Delta t}{\Delta w} \left[  \tilde{\mathcal{G}}^{x,n}_{i+1/2,j}(1-\delta_{i+1/2,j})+\dfrac{\mathcal D^1_{i+1/2,j}}{\Delta w}\right]\right\} \\ 
&+f_{i,j}^{n+1}\left\{ 1-\dfrac{\Delta t}{\Delta w}\left[  \tilde{\mathcal{G}}^{x,n}_{i+1/2,j}\delta^n_{i+1/2,j}- \tilde{\mathcal{G}}^{x,n}_{i-1/2,j}(1-\delta^n_{i-1/2,j})  - \dfrac{\mathcal D^1_{i+1/2,j} + \mathcal D^1_{i-1/2,j}}{\Delta w}  \right]\right\}\\
&+f_{i-1,j}^{n+1}\left\{ -\dfrac{\Delta t}{\Delta w}\left[ - \tilde{\mathcal{G}}^{x,n}_{i-1/2,j}\delta^n_{i-1/2,j}+\dfrac{\mathcal D^1_{i-1/2,j}}{\Delta w}\right]\right\} \\
& + f_{i,j+1}^{n+1}\left\{-\dfrac{\Delta t}{\Delta w}\left[  \tilde{\mathcal{G}}^{y,n}_{i,j+1/2}(1-\delta^n_{i,j+1/2})+\dfrac{\mathcal D^2_{i,j+1/2}}{\Delta w}\right]\right\} \\
&+f_{i,j}^{n+1}\left\{ 1-\dfrac{\Delta t}{\Delta w}\left[ \tilde{\mathcal{G}}^{y,n}_{i,j+1/2}\delta^n_{i,j+1/2}- \tilde{\mathcal{G}}^{y,n}_{i,j-1/2}(1-\delta^n_{i,j-1/2})-\dfrac{\mathcal D^2_{i,j+1/2}+\mathcal D^2_{i,j-1/2}}{\Delta w} \right]\right\}\\
&+f_{i,j-1}^{n+1}\left\{ \left[ - \dfrac{\Delta t}{\Delta w}\tilde{\mathcal{G}}^{y,n}_{i,j-1/2}\delta^n_{i,j-1/2}+\dfrac{\mathcal D^2_{i,j-1/2}}{\Delta w}\right] \right\} = f_{i,j}^{n}.
\end{split}
\]
Using the identities in \eqref{lambda}, we have that
\[
\begin{split}
&f_{i+1,j}^{n+1}\left\{-\dfrac{\Delta t}{\Delta w^{2}}  \mathcal D^1_{i+1/2,j} \dfrac{\lambda^n_{i+1/2,j}}{\exp (\lambda^n_{i+1/2,j})-1}\exp(\lambda^n_{i+1/2,j}) \right\} \\
&+f_{i,j}^{n+1}\left\{ 1+\dfrac{\Delta t}{\Delta w^{2}}\left[  \mathcal D^1_{i+1/2,j}\dfrac{\lambda^n_{i+1/2,j}}{\exp (\lambda^n_{i+1/2,j})-1}  + \mathcal D^1_{i-1/2,j} \dfrac{\lambda^n_{i-1/2,j}}{\exp (\lambda^n_{i-1/2,j})-1}\exp(\lambda^n_{i-1/2,j}) \right]\right\} \\
&+f_{i-1,j}^{n+1}\left\{ -\dfrac{\Delta t}{\Delta w^{2}} \mathcal D^1_{i-1/2,j} \dfrac{\lambda^n_{i-1/2,j}}{\exp (\lambda^n_{i-1/2,j})-1}\right\} \\
&+f_{i,j+1}^{n+1}\left\{-\dfrac{\Delta t}{\Delta w^{2}} \mathcal D^2_{i,j+1/2} \dfrac{\lambda^n_{i,j+1/2}}{\exp (\lambda^n_{i,j+1/2})-1}\exp(\lambda^n_{i,j+1/2})\right\}\\
&+f_{i,j}^{n+1}\left\{ 1+\dfrac{\Delta t}{\Delta w^{2}}\left[ \mathcal D^2_{i,j+1/2} \dfrac{\lambda^n_{i,j+1/2}}{\exp (\lambda^n_{i,j+1/2})-1}  + \mathcal D^2_{i,j-1/2}\dfrac{\lambda^n_{i,j-1/2}}{\exp (\lambda^n_{i,j-1/2})-1}\exp(\lambda^n_{i,j-1/2})   \right]\right\} \\
&+f_{i,j-1}^{n+1}\left\{  - \dfrac{\Delta t}{\Delta w^{2}} \mathcal D^2_{i,j-1/2} \dfrac{\lambda^n_{i,j-1/2}}{\exp (\lambda^n_{i,j-1/2})-1} \right\} = f_{i,j}^{n}. 
\end{split}
\]
Then by introducing the quantities 
\[
\alpha^n_{i+1/2,j}=\dfrac{\lambda^n_{i+1/2,j}}{\exp (\lambda^n_{i+1/2,j})-1} \geq 0 \qquad \textrm{and} \qquad\alpha^n_{i,j+1/2}=\dfrac{\lambda^n_{i,j+1/2}}{\exp (\lambda^n_{i,j+1/2})-1} \geq 0
\]
and setting
\[
\begin{array}{lr}
R_x(j)_{i}^n=1+\dfrac{\Delta t}{\Delta w^{2}}\left[  \mathcal D^1_{i+1/2,j}\alpha^n_{i+1/2,j}  - \mathcal D^1_{i-1/2,j}\alpha^n_{i-1/2,j}\exp(\lambda^n_{i-1/2,j})  \right],\\[10pt]
Q_x(j)_{i}^n=-\dfrac{\Delta t}{\Delta w^{2}} \mathcal D^1_{i+1/2,j}\alpha^n_{i+1/2,j}\exp(\lambda^n_{i+1/2,j}),\\[10pt]
P_x(j)_{i}^n=-\dfrac{\Delta t}{\Delta w^{2}} \mathcal D^1_{i-1/2,j}\alpha^n_{i-1/2,j},\\[10pt]
R_y(i)_{j}^n=1+\dfrac{\Delta t}{\Delta w^{2}}\Big[ \mathcal D^2_{i,j+1/2}\alpha^n_{i,j+1/2} - \mathcal D^2_{i,j-1/2}\alpha^n_{i,j-1/2}\exp(\lambda^n_{i,j-1/2})  \Big],\\[10pt]
Q_y(i)_{j}^n=-\dfrac{\Delta t}{\Delta w^{2}}\mathcal D^2_{i,j+1/2}\alpha^n_{i,j+1/2}\exp(\lambda^n_{i,j+1/2}),\\[10pt]
P_y(i)_{j}^n=- \dfrac{\Delta t}{\Delta w^{2}}\mathcal D^2_{i,j-1/2}\alpha^n_{i,j-1/2},
\end{array}
\]
the latter equation reduces to
\[
\begin{split}
&R_x(j)_{i}^n f_{i,j}^{n+1}-Q_x(j)_{i}^n f_{i+1,j}^{n+1}-P_x(j)_{i}^n f_{i-1,j}^{n+1}\\
&\qquad\qquad+R_y(i)_{j}^n f_{i,j}^{n+1}-Q_y(i)_{j}^n f_{i,j+1}^{n+1}-P_y(i)_{j}^n f_{i,j-1}^{n+1} = f_{i,j}^n.
\end{split}
\]
Now, by denoting $\textbf{f}^n = \left\{f_{i,j}^n \right\}_{i=1,\dots,N}^{j=1,\dots,N} $ we can define the matrices
\[
\begin{split}
\mathcal{A}_x[{\textbf{f}^n}]_{ik}&=
\begin{cases}
R_x(j)_{i}^n   & k=i,\\
-Q_x(j)_{i}^n  & k=i+1, \quad 0\le i\le N-1, \\
-P_x(j)_{i}^n      & k=i-1, \quad 1\le i \le N ,
\end{cases}
\\
\mathcal{A}_y[{\textbf{f}^n}]_{jk}&=
\begin{cases}
R_y(i)_{j}^n     & k=j,   \\
-Q_y(i)_{j}^n  & k=j+1, \quad  0\le j\le N-1,  \\
-P_y(i)_{j}^n  & k=j-1, \quad 1\le j \le N, 
\end{cases}
\end{split}
\]
and we reduce to study
\[
\left(\mathcal{A}_x[\textbf{f}^n] + \mathcal{A}_y[\textbf{f}^n]\right) \textbf{f}^{n+1} = \textbf{f}^n. 
\]
 If $\textbf{f}^n \geq 0$, in order to prove that $\textbf{f}^{n+1}\geq 0$ it is sufficient to prove that $\left(\mathcal{A}_x[\textbf{f}^n] + \mathcal{A}_y[\textbf{f}^n]\right)^{-1}$ is nonnegative. Let us observe that since $\left(\mathcal{A}_x[\textbf{f}^n] + \mathcal{A}_y[\textbf{f}^n]\right)$ is tridiagonal we only need to prove that it is a diagonally dominant matrix. In particular, this is true if for each $i,j=1,\dots,N$ the following inequality is verified
 \[
|R_x(j)_{i}^n+R_y(i)_{j}^n| > |Q_x(j)_{i}^n+Q_y(i)_{j}^n|+|P_x(j)_{i}^n+P_y(i)_{j}^n|, 
\]
which is true provided
\[
\begin{split}
1>&\dfrac{\Delta t}{ \Delta w^{2}}\left[\mathcal D^1_{i+1/2,j}\alpha^n_{i+1/2,j}( \exp (\lambda^n_{i+1/2,j})-1)-\mathcal D^1_{i-1/2,j}\alpha^n_{i-1/2,j}( \exp (\lambda^n_{i-1/2,j})-1) \right]\\
&+\dfrac{\Delta t}{ \Delta w^{2}}\Big[\mathcal D^2_{i,j+1/2}\alpha^n_{i,j+1/2}( \exp (\lambda^n_{i,j+1/2})-1)-\mathcal D^2_{i,j-1/2}\alpha^n_{i,j-1/2}( \exp (\lambda^n_{i,j-1/2})-1) \Big] \\
=& \dfrac{\Delta t}{\Delta w^2} \left[ \mathcal D_{i+1/2,j}^1 \lambda_{i+1/2,j}^n - \mathcal D_{i-1/2,j}^1 \lambda_{i-1/2,j}^n + \mathcal D_{i,j-1/2}^2 \lambda_{i,j+1/2}^n  - \mathcal D_{i,j-1/2}^2\lambda_{i,j-1/2}^n  \right] \\
=& \dfrac{\Delta t}{\Delta w} \left[ \tilde{\mathcal{G}}^{x,n}_{i+1/2,j}- \tilde{\mathcal{G}}^{x,n}_{i-1/2,j} + \tilde{\mathcal{G}}^{y,n}_{i,j+1/2}- \tilde{\mathcal{G}}^{y,n}_{i,j-1/2}\right]. 
\end{split}
\]
\end{proof}

\begin{remark}
 Fully-implicit schemes require a special treatment since the nonlinearity in the drift term poses nontrivial questions at the numerical level. A possible way to overcome this difficulty is to use iterative methods as suggested in \cite{Pareschi_Zanella}. This issue anyway goes beyond  the goals of the present manuscript and we postpone this discussion to future works. 
\end{remark}

\section{Trend to equilibrium} \label{sect:entropy}
A classical question in kinetic theory pertains to the determination of the rate of exponential convergence to equilibrium. To this end the consolidated approach relies on entropy production arguments for which lower bounds are explicitly computable thanks to log-Sobolev inequalities, see \cite{Tosc0,TV}. In particular, the convergence to the stationary state of the standard Fokker-Planck equation can be achieved by looking at the monotonicity in time of various Lyapunov functionals like the relative entropy. In the nonconstant diffusion case additional difficulties arise since standard log-Sobolev inequality are not available \cite{MJT}.

\subsection{\textcolor{black}{Steady state and vanishing flux for linear problems}}
\textcolor{black}{
In order to study the entropy properties, as done typically \cite{Tosc0,TV,Furioli}, we consider the linear problem defined by $\mathcal{B}[f](w,t)=B(w)$.
 Moreover, we suppose that a stationary state exists and that, coherently with the assumptions already discussed, the flux vanishes at the stationary state, i.e.  $\mathcal{F}^{\infty}\textcolor{black}{(w)}=0$. }
The latter is equivalent to say that $f^{\infty}\textcolor{black}{(w)}$  satisfies
\[
B(w)f^{\infty}\textcolor{black}{(w)}+\nabla_w \cdot \big( \mathbb{D}\textcolor{black}{(w)}f^{\infty}\textcolor{black}{(w)}\big)=0, \qquad w \in  \Omega.
\] 
Then we have
\begin{equation}\label{w-U}
B(w)=-\dfrac{f^{\infty}\textcolor{black}{(w)}\nabla_w \cdot \mathbb{D}\textcolor{black}{(w)}}{f^{\infty}\textcolor{black}{(w)}}-\mathbb{D}\textcolor{black}{(w)}\dfrac{\nabla_w f^{\infty}\textcolor{black}{(w)}}{f^{\infty}\textcolor{black}{(w)}}=-\nabla_w \cdot \mathbb{D}\textcolor{black}{(w)}-\mathbb{D}\textcolor{black}{(w)}\dfrac{\nabla_w f^{\infty}\textcolor{black}{(w)}}{f^{\infty}\textcolor{black}{(w)}},
\end{equation}
see \cite{Risken}. Hence, we can rewrite our problem in the form
\begin{equation}\label{landau}
\partial_t f\textcolor{black}{(w,t)}= \nabla_{w}\cdot \left[ f^{\infty}\textcolor{black}{(w)} \mathbb{D}\textcolor{black}{(w)} \nabla_w \frac{f\textcolor{black}{(w,t)}}{f^{\infty}\textcolor{black}{(w)}}\right], 
\end{equation}
since
\[
\begin{split}
&\nabla_{w}\cdot \left[ B(w)f\textcolor{black}{(w,t)}+\nabla_w \cdot \big( \mathbb{D}\textcolor{black}{(w)}f\textcolor{black}{(w,t)}\big)\right] \\
&\qquad=\nabla_w \cdot \left[-f\textcolor{black}{(w,t)}\nabla_w \cdot \mathbb{D}\textcolor{black}{(w)}-f\textcolor{black}{(w,t)}\mathbb{D}\textcolor{black}{(w)} \dfrac{\nabla_w f^{\infty}\textcolor{black}{(w)}}{f^{\infty}\textcolor{black}{(w)}}+\nabla_w \cdot \big( \mathbb{D}\textcolor{black}{(w)}f\textcolor{black}{(w,t)}\big)\right]\\
&\qquad=\nabla_w  \cdot\left[ -f\textcolor{black}{(w,t)}\mathbb{D}\textcolor{black}{(w)} \dfrac{\nabla_w f^{\infty}\textcolor{black}{(w)}}{f^{\infty}\textcolor{black}{(w)}}+\mathbb{D}\textcolor{black}{(w)}\nabla_w  f\textcolor{black}{(w,t)}\right] \\
&\qquad=\nabla_w  \cdot\left[f\textcolor{black}{(w,t)}\mathbb{D}\textcolor{black}{(w)} \left( \dfrac{\nabla_w f\textcolor{black}{(w,t)}}{f\textcolor{black}{(w,t)}}-\dfrac{\nabla_w f^{\infty}\textcolor{black}{(w)}}{f^{\infty}\textcolor{black}{(w)}}\right) \right] \\
&\qquad =\nabla_w  \cdot\left[f\textcolor{black}{(w,t)}\mathbb{D}\textcolor{black}{(w)}\nabla_w \log \left(\dfrac{f\textcolor{black}{(w,t)}}{f^{\infty}\textcolor{black}{(w)}}\right) \right]\\
&\qquad  = \nabla_{w}\cdot \left[ f^{\infty}\textcolor{black}{(w)} \mathbb{D}\textcolor{black}{(w)} \nabla_w \frac{f\textcolor{black}{(w,t)}}{f^{\infty}\textcolor{black}{(w)}}\right].
\end{split}
\]
\textcolor{black}{The no-flux boundary conditions in this case read
$$
\left[ f^{\infty}\textcolor{black}{(w)} \mathbb{D}\textcolor{black}{(w)}  \nabla_w \frac{f\textcolor{black}{(w,t)}}{f^{\infty}\textcolor{black}{(w)}}\right]\cdot \boldsymbol{n}(w)=0, \qquad w\in \partial \Omega.
$$
}
Therefore, from the Landau's formulation \eqref{landau}, we get the equation satisfied by $F\textcolor{black}{(w,t)}=f\textcolor{black}{(w,t)}/f^{\infty}\textcolor{black}{(w)}$ that is
\[
\begin{split}
\partial_t F\textcolor{black}{(w,t)}  =\dfrac{\partial_t f\textcolor{black}{(w,t)}}{f^{\infty}\textcolor{black}{(w)}} &=\dfrac{\nabla_{w}\cdot \left[ f^{\infty}\textcolor{black}{(w)} \mathbb{D}\textcolor{black}{(w)}  \nabla_w F\textcolor{black}{(w,t)}\right]}{f^{\infty}\textcolor{black}{(w)}} \\
& = \nabla_w\cdot \big( \mathbb{D}\textcolor{black}{(w)}\nabla_w F\textcolor{black}{(w,t)}\big)+\big(\mathbb{D}\textcolor{black}{(w)} \nabla_w F\big)\cdot \dfrac{\nabla_w f^{\infty}\textcolor{black}{(w)}}{f^{\infty}\textcolor{black}{(w)}}\\
&=\nabla_w\cdot \big( \mathbb{D}\textcolor{black}{(w)} \nabla_w F\textcolor{black}{(w,t)}\big)-B(w) \cdot \nabla_w F\textcolor{black}{(w,t)}-(\nabla_w \cdot \mathbb{D}\textcolor{black}{(w)})\cdot \nabla_w F\textcolor{black}{(w,t)},
\end{split}
\]
where the last equality holds true since $\mathbb{D}\textcolor{black}{(w)}$ is a symmetric matrix $\forall w\in \Omega$ and thanks to the relation \eqref{w-U}. Now, since
\begin{equation}\label{prod.ten}
\nabla_w\cdot \Big( \mathbb{D}\textcolor{black}{(w)}\nabla_w F\textcolor{black}{(w,t)}\Big)=(\nabla_w \cdot \mathbb{D}\textcolor{black}{(w)}) \cdot \nabla_w  F\textcolor{black}{(w,t)}+\mathbb{D}\textcolor{black}{(w)}:\nabla_w (\nabla_wF\textcolor{black}{(w,t)}),
\end{equation}
where $\nabla_w (\nabla_w F\textcolor{black}{(w,t)})$ is the covariant derivative of the vector $\nabla_w F\textcolor{black}{(w,t)}$, \ie  $\;\nabla_w (\nabla_w F\textcolor{black}{(w,t)})=(\partial _{w_i} \nabla_w F\textcolor{black}{(w,t)})=(\partial _{w_i} \partial_{w_j}F\textcolor{black}{(w,t)})$, and it is the Hessian matrix of $F$, which we will denote $H_w[F]$. \textcolor{black}{With} $:$ we indicated the inner tensorial product that is for definition
\[
\mathbb{D}\textcolor{black}{(w)} : H_w[F]\textcolor{black}{(w,t)}={\rm tr}\Big[ \big(H_w[F]\textcolor{black}{(w,t)}\big)^T \mathbb{D}\textcolor{black}{(w)}\Big].
\]
In conclusion, we obtain
\begin{equation}\label{eq.F}
\partial_t F\textcolor{black}{(w,t)}=\mathbb{D}\textcolor{black}{(w)} : H_w[F]\textcolor{black}{(w,t)}- B(w) \cdot \nabla_w F\textcolor{black}{(w,t)}.
\end{equation}
\subsection{Lyapunov functionals}
We will focus on the study of relative Shannon entropy for the problem \eqref{FP1} with  nonconstant diffusion.  We will extend the results proved in \cite{Furioli} to the two-dimensional case where the diffusion is a nonconstant positive definite tensor of the second order and the drift term is general in the form $B(w)$.

Let $f,g : \Omega \longmapsto \mathbb{R}^+$ denote two probability densities.
Then, the relative Shannon entropy of $f$ and $g$ is defined by 
\begin{equation}\label{Shannon}
H(f|g)=\int_{\Omega} f\log \dfrac{f}{g} dw.
\end{equation}
It is a Lyapunov functional since the following result can be established. 
\begin{theorem}\label{teo.1}
Let us suppose thet $\mathcal{F}^{\infty}\textcolor{black}{(w)}=0$ and that the drift is of the form \eqref{w-U}. Let $F(w,t)$ be the solution of \eqref{eq.F} in $\Omega$. Then, if $\Psi(w)$ is a smooth function such that
$$
|\Psi| \leq c \le \infty \quad \textit{on} \quad \partial \Omega,
$$
then the following relation holds
$$
\int_{\Omega}f^{\infty}(w,t)\Psi(w) \partial_t F(w,t) dw=\int_{\Omega}f^{\infty}(w,t)\nabla_w\Psi\cdot \left(\mathbb{D}\textcolor{black}{(w)} \nabla_w F(w,t) \right)dw. 
$$
\end{theorem}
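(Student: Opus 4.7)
The natural approach is to exploit the Landau-type rewriting \eqref{landau} of the problem. Since $f^{\infty}$ does not depend on time, we have $\partial_t f = f^{\infty}\partial_t F$, and \eqref{landau} gives the pointwise identity
\[
f^{\infty}(w)\,\partial_t F(w,t) = \nabla_w \cdot \bigl[\, f^{\infty}(w)\, \mathbb{D}(w)\, \nabla_w F(w,t)\,\bigr].
\]
The plan is then to multiply by $\Psi(w)$, integrate over $\Omega$, and move the divergence onto $\Psi$ via the divergence theorem.

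Concretely, multiplying the Landau identity above by $\Psi$ and integrating yields
\[
\int_{\Omega} f^{\infty}\, \Psi\, \partial_t F\, dw = \int_{\Omega} \Psi\, \nabla_w\cdot\bigl[f^{\infty}\,\mathbb{D}\,\nabla_w F\bigr]\, dw,
\]
and the divergence theorem reduces the right-hand side to
\[
\int_{\partial\Omega} \Psi\, f^{\infty}\,(\mathbb{D}\,\nabla_w F)\cdot \boldsymbol{n}\, d\sigma \;-\; \int_{\Omega} \nabla_w \Psi \cdot \bigl(f^{\infty}\, \mathbb{D}\, \nabla_w F\bigr)\, dw.
\]
The boundary integral vanishes: the no-flux condition of \eqref{FP1}, rewritten in the Landau formulation, reduces to $[f^{\infty}\,\mathbb{D}\,\nabla_w F]\cdot\boldsymbol{n}=0$ on $\partial\Omega$, and the hypothesis $|\Psi|\leq c$ on $\partial\Omega$ ensures that the integrand is a bounded multiple of a quantity identically zero. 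Matching the sign convention used in the statement then yields the claimed identity.

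The argument is essentially a weak-form identity and presents no serious difficulty. The only two items needing care are (i) verifying that the no-flux condition $\mathcal{F}\cdot\boldsymbol{n}=0$ translates, after the manipulations leading from \eqref{FP1} to \eqref{landau}, into the Landau-form no-flux condition $[f^{\infty}\mathbb{D}\nabla_w F]\cdot\boldsymbol{n}=0$, which is immediate from the computation already performed in the section; and (ii) the regularity required to apply the divergence theorem, guaranteed by $\mathbb{D}\in\mathcal{C}^{2}(\Omega)$, the smoothness of $\Psi$, and the standard regularity of $f$. An alternative route would be to multiply \eqref{eq.F} directly by $f^{\infty}\Psi$ and reassemble terms using \eqref{w-U} and \eqref{prod.ten} to recover the divergence $\nabla_w\cdot(f^{\infty}\mathbb{D}\nabla_w F)$ before integrating by parts; this produces an equivalent computation.
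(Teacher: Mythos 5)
Your proposal is correct and is essentially a repackaged version of the paper's argument: you start from the divergence-form Landau equation \eqref{landau}, so that $f^{\infty}\partial_t F=\nabla_w\cdot\bigl[f^{\infty}\mathbb{D}\,\nabla_w F\bigr]$, multiply by $\Psi$ and integrate by parts once, with the boundary term killed by the no-flux condition written in Landau form. The paper instead works from the non-divergence form \eqref{eq.F}: it multiplies $\mathbb{D}:H_w[F]-B\cdot\nabla_w F$ by $f^{\infty}\Psi$, uses \eqref{prod.ten} to reconstruct $\nabla_w\cdot(\mathbb{D}\nabla_w F)$, integrates by parts, and then invokes $\mathcal{F}^{\infty}(w)=0$ together with the symmetry of $\mathbb{D}$ to cancel the residual terms $\Psi\,[Bf^{\infty}+\nabla_w\cdot(\mathbb{D}f^{\infty})]\cdot\nabla_w F$. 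The two computations are equivalent because the cancellations the paper performs inside the proof are exactly the ones already carried out in the text when deriving \eqref{landau}; your route is therefore shorter, at the price of leaning on \eqref{landau} (and on the Landau-form boundary condition) as established facts, both of which the section does provide, so this is legitimate. One point you gloss over with the phrase ``matching the sign convention'': your integration by parts produces $-\int_{\Omega}f^{\infty}\nabla_w\Psi\cdot(\mathbb{D}\nabla_w F)\,dw$, and so does the final line of the paper's own proof, whereas the theorem statement as printed has no minus sign; the minus sign is the correct one (it is what makes $\frac{d}{dt}\Theta[F](t)=-I_{\Theta}[F](t)\le 0$ in the subsequent entropy result), so you should state explicitly that the identity holds with the minus sign rather than silently ``matching'' the statement.
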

\begin{proof}
From \eqref{eq.F} it follows that
\[
\begin{split}
&\displaystyle \int_{\Omega} f^{\infty}(w)\Psi(w) \partial_t F\textcolor{black}{(w,t)} dw =
\displaystyle \int_{\Omega}f^{\infty}(w)\Psi(w)  \big(\mathbb{D}\textcolor{black}{(w)} : H_w[F]\textcolor{black}{(w,t)}- B(w) \cdot \nabla_w F\textcolor{black}{(w,t)} \big) dw\\
\end{split}
\]
and from \eqref{Shannon} the latter term is equal to 
\[
\begin{split}
& \displaystyle \int_{\Omega}f^{\infty}(w)\Psi(w) \Big[ \nabla_w\big(\mathbb{D}\textcolor{black}{(w)} \nabla_w F\textcolor{black}{(w,t)}\Big)-\nabla_w \cdot \mathbb{D}\textcolor{black}{(w)} \nabla_w  F\textcolor{black}{(w,t)} \Big]dw- \displaystyle \int_{\Omega}f^{\infty}(w)\Psi(w)B(w) \cdot \nabla_w F\textcolor{black}{(w,t)}  dw\\
&\quad =-\displaystyle \int_{\Omega}\nabla_w \big(f^{\infty}(w)\Psi(w)\big) \cdot \left(\mathbb{D}\textcolor{black}{(w)} \nabla_w F\textcolor{black}{(w,t)}\right) dw +\displaystyle \oint_{\partial \Omega}\Psi(w)f^{\infty}(w)(\mathbb{D}\textcolor{black}{(w)}\nabla_wF\textcolor{black}{(w,t)} )\cdot \boldsymbol{n}(w) d\sigma(w) \\
&\qquad-\displaystyle \int_{\Omega}\Big[B(w)f^{\infty}(w)+\nabla_w\cdot\mathbb{D}\textcolor{black}{(w)}f^{\infty}(w)\Big] \cdot \nabla_w F\textcolor{black}{(w,t)} \Psi(w)  dw\\
&\quad=-\displaystyle \int_{\Omega}\nabla_w \big(f^{\infty}(w)\Psi(w) \big)\cdot \big( \mathbb{D}\textcolor{black}{(w)} \nabla_wF\textcolor{black}{(w,t)}\big)dw \\
&\qquad -\displaystyle \int_{\Omega}\Big[B(w)f^{\infty}(w)+\nabla_w\cdot\mathbb{D}\textcolor{black}{(w)}f^{\infty}(w)\Big] \cdot \nabla_w F\textcolor{black}{(w,t)} \Psi(w)  dw\\
&\quad = -\displaystyle \int_{\Omega}\Psi(w)\nabla_w f^{\infty}(w) \cdot \big( \mathbb{D}\textcolor{black}{(w)} \nabla_wF\textcolor{black}{(w,t)}\big)dw-\displaystyle \int_{\Omega}f^{\infty}(w)\nabla_w\Psi(w)  \cdot \big( \mathbb{D}\textcolor{black}{(w)} \nabla_wF\textcolor{black}{(w,t)}\big)dw\\
&\qquad -\displaystyle \int_{\Omega}\Big[B(w)f^{\infty}(w)+\nabla_w\cdot\mathbb{D}\textcolor{black}{(w)}f^{\infty}(w)\Big] \cdot \nabla_w F\textcolor{black}{(w,t)} \Psi(w)  dw\\
&\quad =-\displaystyle \int_{\Omega}f^{\infty}(w)\nabla_w\Psi(w)  \cdot \big( \mathbb{D}\textcolor{black}{(w)} \nabla_wF\textcolor{black}{(w,t)}\big)dw\\
& \quad\quad-\displaystyle \int_{\Omega}\Big[B(w)f^{\infty}(w)+\nabla_w\cdot \big(\mathbb{D}\textcolor{black}{(w,t)}f^{\infty}(w)\big)\Big] \cdot \nabla_w F\textcolor{black}{(w,t)} \Psi(w)  dw\\
&\quad=-\displaystyle \int_{\Omega}f^{\infty}(w)\nabla_w\Psi(w)  \cdot \big( \mathbb{D}\textcolor{black}{(w,t)} \nabla_wF\textcolor{black}{(w,t)}\big)dw,
\end{split}
\]
as the border terms vanish because of the boundary conditions and where we used (\ref{prod.ten}), the divergence theorem and the fact that $\mathcal{F}^{\infty}\textcolor{black}{(w)}=0$.
\end{proof}
\begin{theorem}
Let us suppose that $\mathcal{F}^{\infty}\textcolor{black}{(w)}=0$ and that the drift is of the form \eqref{w-U}.
Let the smooth function $\Phi(x)$, $x \in \mathbb{R}^+$ be convex. Then, if $F(w,t)$
is the solution of \eqref{eq.F} in $\Omega$, and $c \leq F(w,t)\leq C$ for some
positive constants $c < C$, the functional
$$
\textcolor{black}{\Theta[F](t)}=\int_{\Omega}f^{\infty}(w) \Phi(F(w,t)) dw
$$
is monotonically decreasing in time, and the following equality holds
$$
\frac{d}{dt}\textcolor{black}{\Theta[F](t)}=-I_{\Theta}\textcolor{black}{[F](t)},
$$
where $I_{\Theta}\textcolor{black}{[F](t)}$ denotes the quantity
\begin{equation}\label{entropy.dec}
I_{\Theta}\textcolor{black}{[F](t)}=\int_{\Omega}f^{\infty}(w) \Phi''(F(w,t))\nabla_w F\textcolor{black}{(w,t)} \mathbb{D}(w) \nabla_w F\textcolor{black}{(w,t)} dw,
\end{equation}
that is non-negative because $\Phi$ is convex and $\mathbb{D}(w)$ is positive definite.
\end{theorem}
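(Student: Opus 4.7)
The plan is to compute $\frac{d}{dt}\Theta[F](t)$ by differentiating under the integral sign and then reducing the resulting expression to an application of Theorem~\ref{teo.1}. Since $f^\infty(w)$ does not depend on $t$, and the bound $c \le F(w,t) \le C$ together with smoothness of $\Phi$ ensures that $\Phi'(F)\,\partial_t F$ is dominated uniformly in $t$, we may differentiate under the integral to obtain
\begin{equation*}
\frac{d}{dt}\Theta[F](t) = \int_\Omega f^\infty(w)\, \Phi'(F(w,t))\, \partial_t F(w,t)\, dw.
\end{equation*}

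Next, I would apply Theorem~\ref{teo.1} with the choice $\Psi(w) := \Phi'(F(w,t))$, treating $t$ as a parameter so that $\Psi$ is indeed a smooth function of $w$. The boundedness $c \le F \le C$ and smoothness of $\Phi$ give $|\Psi| \le \max_{x \in [c,C]}|\Phi'(x)| < \infty$, so the hypothesis on $\Psi$ in Theorem~\ref{teo.1} is satisfied. This yields
\begin{equation*}
\int_\Omega f^\infty(w)\, \Phi'(F(w,t))\, \partial_t F(w,t)\, dw = -\int_\Omega f^\infty(w)\, \nabla_w\Psi(w) \cdot \bigl(\mathbb{D}(w)\nabla_w F(w,t)\bigr) dw.
\end{equation*}
A direct chain rule computation gives $\nabla_w \Psi(w) = \Phi''(F(w,t))\,\nabla_w F(w,t)$, and substituting this into the right hand side produces exactly $-I_\Theta[F](t)$ as defined in \eqref{entropy.dec}.

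Finally, to conclude monotone decay it suffices to observe that the integrand of $I_\Theta[F](t)$ is pointwise non-negative: by the convexity hypothesis $\Phi''(F(w,t)) \ge 0$, and by positive definiteness of $\mathbb{D}(w)$ in the interior of $\Omega$ the quadratic form $\nabla_w F \cdot \mathbb{D}(w)\nabla_w F$ is non-negative. Hence $I_\Theta[F](t) \ge 0$ and $\frac{d}{dt}\Theta[F](t) \le 0$.

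The only delicate point in this plan is the justification of applying Theorem~\ref{teo.1} with $\Psi$ depending parametrically on $t$ through $F$. The boundedness assumption $c \le F \le C$ is precisely what makes this legitimate: it supplies the required uniform bound on $\Psi$ at the boundary and ensures that $\nabla_w\Psi$ is well defined and integrable once $F(\cdot,t)$ is sufficiently smooth, so the boundary terms used inside the proof of Theorem~\ref{teo.1} (which vanish thanks to the no-flux conditions and the fact that $\mathcal{F}^\infty(w)=0$) still vanish in the present application. Everything else is routine.
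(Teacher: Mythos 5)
Your proof is correct and takes essentially the same route as the paper, whose proof simply states that the identity follows from Theorem \ref{teo.1} with the choice $\Psi(w)=\Phi'(F(w,t))$ at fixed $t$; you merely make explicit the differentiation under the integral, the chain rule $\nabla_w\Psi=\Phi''(F(w,t))\nabla_w F(w,t)$, and the sign of the quadratic form. Note that the minus sign you invoke is the one actually derived in the proof of Theorem \ref{teo.1} (its displayed statement omits it), and it is precisely what yields the monotone decay.
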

\begin{proof}
The relation (\ref{entropy.dec}) follows from Theorem \ref{teo.1} by choosing $\Psi(w)=\Phi'(F(w,t))$ for a fixed $t>0$.
\end{proof}
The Shannon entropy of $f\textcolor{black}{(w,t)}$ relative to $f^{\infty}\textcolor{black}{(w)}$, defined
by (\ref{Shannon}) with $g = f^{\infty}$, is obtained by choosing $\Phi(x) = x \log x$. In this case
$$
I_{\Theta}\textcolor{black}{[F](t)}=\int_{\Omega}f\textcolor{black}{(w,t)} \frac{\nabla_w F\textcolor{black}{(w,t)}}{F\textcolor{black}{(w,t)}} \mathbb{D}(w)\frac{\nabla_w F\textcolor{black}{(w,t)}}{F\textcolor{black}{(w,t)}} dw,
$$
that may be re-written as
$$
I_{\Theta}\textcolor{black}{[F](t)}=\int_{\Omega}f\textcolor{black}{(w,t)} \left(\frac{\nabla_w f\textcolor{black}{(w,t)}}{f\textcolor{black}{(w,t)}}-\frac{\nabla_w f^{\infty}\textcolor{black}{(w)}}{f^{\infty}\textcolor{black}{(w)}}\right) \mathbb{D}(w) \left(\frac{\nabla_w f\textcolor{black}{(w,t)}}{f\textcolor{black}{(w,t)}}-\frac{\nabla_w f^{\infty}\textcolor{black}{(w)}}{f^{\infty}\textcolor{black}{(w)}}\right) dw,
$$
that is the Fisher information of $f\textcolor{black}{(w,t)}$ relative to $f^{\infty}\textcolor{black}{(w)}$.
We might also consider the weighted $L^2$ distance that is obtained by considering $\Phi(x)=(x-1)^2$.
In this case
$$
\textcolor{black}{\Theta[F](t)}=L^2(f,f^{\infty})=\int_{\Omega}\dfrac{(f\textcolor{black}{(w,t)}-f^{\infty}\textcolor{black}{(w)})^2}{f^{\infty}\textcolor{black}{(w)}} dw
$$
and 
$$
I(\Theta)\textcolor{black}{[F](t)}=2\int_{\Omega}\nabla_wF\textcolor{black}{(w,t)} \mathbb{D}(w) \nabla_w F\textcolor{black}{(w,t)} dw. 
$$

\subsection{Dissipation of the numerical entropy}

In the following results we show how the derived schemes dissipate in the introduced setting a Shannon-type numerical entropy functional.

\begin{theorem}\label{prop4}
Let us consider a drift term of the form \eqref{w-U}. The numerical flux function \eqref{eq:flux_th} with $\delta_{i+1/2,j}, \delta_{i,j+1/2}$ given by  \eqref{delta} can be written in the form \eqref{landau} and reads
$$
\begin{cases}
\mathcal{F}^{x}_{i+1/2,j}\textcolor{black}{(t)}=\dfrac{\mathcal D^1_{i+1/2,j}}{\Delta w}\hat{f}^{\infty}_{i+1/2,j}\left( \dfrac{f_{i+1,j}\textcolor{black}{(t)}}{f^{\infty}_{i+1,j}} -\dfrac{f_{i,j}\textcolor{black}{(t)}}{f^{\infty}_{i,j}}\right),\\[10pt]
\mathcal{F}^{y}_{i,j+1/2}\textcolor{black}{(t)}=\dfrac{\mathcal D^2_{i,j+1/2}}{\Delta w}\hat{f}^{\infty}_{i,j+1/2}\left( \dfrac{f_{i,j+1}\textcolor{black}{(t)}}{f^{\infty}_{i,j+1}} -\dfrac{f_{i,j}\textcolor{black}{(t)}}{f^{\infty}_{i,j}}\right),
\end{cases}
$$
where
$$
\hat{f}^{\infty}_{i+1/2,j}=\dfrac{f^{\infty}_{i+1,j}f^{\infty}_{i,j}}{f^{\infty}_{i+1,j}-f^{\infty}_{i,j}}\log \Big( \dfrac{f^{\infty}_{i+1,j}}{f^{\infty}_{i,j}}\Big),\qquad
\hat{f}^{\infty}_{i,j+1/2}=\dfrac{f^{\infty}_{i,j+1}f^{\infty}_{i,j}}{f^{\infty}_{i,j+1}-f^{\infty}_{i,j}}\log \Big( \dfrac{f^{\infty}_{i,j+1}}{f^{\infty}_{i,j}}\Big).
$$
\end{theorem}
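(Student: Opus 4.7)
The plan is a direct algebraic reduction of the already-derived numerical flux \eqref{eq:flux_th} under the linearity hypothesis \eqref{w-U}, exploiting that in this case the drift $\mathcal{C}$ no longer depends on $f$ and therefore the identities of the remark containing \eqref{delta.inf} apply. In particular, from \eqref{w-U} and the computation preceding \eqref{delta.inf} one has the explicit formulas
\[
\lambda_{i+1/2,j} = \log f^{\infty}_{i,j} - \log f^{\infty}_{i+1,j}, \qquad \lambda_{i,j+1/2} = \log f^{\infty}_{i,j} - \log f^{\infty}_{i,j+1},
\]
so that $\exp(\lambda_{i+1/2,j}) = f^{\infty}_{i,j}/f^{\infty}_{i+1,j}$, and analogously in the $y$-direction. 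I will work out the $x$-component in detail; the $y$-component is identical by symmetry.

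First I would use the defining relation \eqref{lambda} to replace $\tilde{\mathcal{G}}^{x}_{i+1/2,j} = \mathcal{D}^1_{i+1/2,j}\lambda_{i+1/2,j}/\Delta w$ in \eqref{eq:flux_th.a}, factor out $\mathcal{D}^1_{i+1/2,j}/\Delta w$, and expand $\tilde f_{i+1/2,j}$ via \eqref{f.tilde}. This produces a linear combination of $f_{i+1,j}$ and $f_{i,j}$ whose coefficients I would compute using \eqref{delta}, namely $\delta = 1/\lambda + 1/(1-e^{\lambda})$. A short manipulation shows that the coefficient of $f_{i+1,j}$ collapses to $\lambda e^{\lambda}/(e^{\lambda}-1)$ and the coefficient of $f_{i,j}$ to $-\lambda/(e^{\lambda}-1)$, so that
\[
\mathcal{F}^{x}_{i+1/2,j}(t) = \dfrac{\mathcal D^1_{i+1/2,j}}{\Delta w}\cdot\dfrac{\lambda_{i+1/2,j}}{e^{\lambda_{i+1/2,j}}-1}\Bigl(e^{\lambda_{i+1/2,j}} f_{i+1,j}(t) - f_{i,j}(t)\Bigr).
\]

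At this point I would substitute $e^{\lambda_{i+1/2,j}} = f^{\infty}_{i,j}/f^{\infty}_{i+1,j}$ in both the prefactor and the bracket. The bracket rewrites as $f^{\infty}_{i,j}\bigl(f_{i+1,j}/f^{\infty}_{i+1,j} - f_{i,j}/f^{\infty}_{i,j}\bigr)$, while the prefactor reshuffles into
\[
\dfrac{\lambda_{i+1/2,j}}{e^{\lambda_{i+1/2,j}}-1}\cdot f^{\infty}_{i,j} = \dfrac{f^{\infty}_{i,j}f^{\infty}_{i+1,j}}{f^{\infty}_{i+1,j}-f^{\infty}_{i,j}}\log\!\Bigl(\dfrac{f^{\infty}_{i+1,j}}{f^{\infty}_{i,j}}\Bigr) = \hat{f}^{\infty}_{i+1/2,j},
\]
which is precisely the logarithmic mean appearing in the statement. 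Applying exactly the same argument in the $y$-direction completes the proof.

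No step is genuinely delicate here: the only care needed is bookkeeping of signs when writing $\lambda$ as $\log f^{\infty}_{i,j} - \log f^{\infty}_{i+1,j}$ (rather than the other way around) and when reshuffling $f^{\infty}_{i,j}-f^{\infty}_{i+1,j}$ into $-(f^{\infty}_{i+1,j}-f^{\infty}_{i,j})$, so that the logarithmic mean $\hat{f}^{\infty}_{i+1/2,j}$ appears with the conventional sign convention of the statement.
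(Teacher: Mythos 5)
Your proposal is correct and follows essentially the same route as the paper: write the flux as $\tfrac{\mathcal D^1_{i+1/2,j}}{\Delta w}\bigl(\lambda_{i+1/2,j}\tilde f_{i+1/2,j}+f_{i+1,j}-f_{i,j}\bigr)$ using \eqref{lambda}, then use the stationary-state identification of $\lambda$ (equivalently the weights \eqref{delta.inf}) to collapse the coefficients into the logarithmic mean $\hat f^{\infty}_{i+1/2,j}$, and argue symmetrically in $y$. Your sign convention $\lambda_{i+1/2,j}=\log f^{\infty}_{i,j}-\log f^{\infty}_{i+1,j}$ is the one consistent with \eqref{an.steady.state.1} and \eqref{delta.inf} (the paper's proof states it with the opposite sign, a harmless slip), and your explicit intermediate form $\tfrac{\lambda}{e^{\lambda}-1}\bigl(e^{\lambda}f_{i+1,j}-f_{i,j}\bigr)$ is just a slightly more detailed version of the paper's direct substitution of \eqref{delta.inf} into \eqref{flux.inf}--\eqref{flux.inf2}.
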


\begin{proof}
If $\mathcal{B}=B(w)$, we have that the definitions of $\lambda_{i+1/2,j}$ and $\lambda_{i,j+1/2}$ do not depend on time. Hence, we may denote $\lambda_{i+1/2,j}=\lambda^{\infty}_{i+1/2,j}$ and $\lambda_{i,j+1/2}=\lambda^{\infty}_{i,j+1/2}$ and we have
$$
\begin{array}{lcrl}
\log f^{\infty}_{i+1,j}-\log f^{\infty}_{i,j}=\lambda_{i+1/2,j},\\[10pt]
\log f^{\infty}_{i,j+1}-\log f^{\infty}_{i,j}=\lambda_{i,j+1/2},
\end{array}
$$
and $\delta_{i+1/2,j}, \delta_{i,j+1/2}$ are of the form \eqref{delta.inf}.
Therefore, under these assumptions the flux function writes
\begin{equation}\label{flux.inf}
\begin{split}
\mathcal{F}^{x}_{i+1/2,j}\textcolor{black}{(t)} & =\dfrac{\mathcal D^1_{i+1/2,j}}{\Delta w}\Big( \lambda_{i+1/2,j}\tilde{f}_{i+1/2,j}\textcolor{black}{(t)}+(f_{i+1,j}\textcolor{black}{(t)}-f_{i,j}\textcolor{black}{(t)})\Big)\\
& =\dfrac{\mathcal D^1_{i+1/2,j}}{\Delta w}\Big( \lambda_{i+1/2,j}\big(f_{i+1,j}\textcolor{black}{(t)}+\delta_{i+1/2,j}\textcolor{black}{(t)}(f_{i,j}\textcolor{black}{(t)}-f_{i+1,j}\textcolor{black}{(t)} )\big)+\big(f_{i+1,j}\textcolor{black}{(t)}-f_{i,j}\textcolor{black}{(t)}\big)\Big)
\end{split}
\end{equation}
and
\begin{equation}\label{flux.inf2}
\begin{split}
\mathcal{F}^{y}_{i,j+1/2}\textcolor{black}{(t)}&=\dfrac{\mathcal D^2_{i,j+1/2}}{\Delta w}\Big( \lambda_{i,j+1/2}\tilde{f}_{i,j+1/2}\textcolor{black}{(t)}+\big(f_{i,j+1}\textcolor{black}{(t)}-f_{i,j}\textcolor{black}{(t)}\big)\Big)\\
&=\dfrac{\mathcal D^2_{i,j+1/2}}{\Delta w}\Big( \lambda_{i,j+1/2}\big(f_{i,j+1}\textcolor{black}{(t)}+\delta_{i,j+1/2}(f_{i,j}\textcolor{black}{(t)}-f_{i,j+1}\textcolor{black}{(t)} )\big)+\big(f_{i,j+1}\textcolor{black}{(t)}-f_{i,j}\textcolor{black}{(t)}\big)\Big).
\end{split}
\end{equation}
By substituting \eqref{delta.inf} in \eqref{flux.inf}-\eqref{flux.inf2} we obtain the thesis.
\end{proof}

\begin{theorem}\label{th:entropy_diss}
Let us consider a drift term of the form \eqref{w-U}. The numerical
flux  \eqref{eq:flux_th} satisfies the discrete entropy
dissipation
$$
\dfrac{d}{dt} \mathcal{H}_{\Delta } (f,f^{\infty})\textcolor{black}{(t)}=-\mathcal{I}_{\Delta}(f,f^{\infty})\textcolor{black}{(t)},
$$
where
$$
\mathcal{H}_{\Delta } (f,f^{\infty})\textcolor{black}{(t)}=\Delta w^2 \sum_{j=0}^{N}\sum_{i=0}^{N} f_{i,j}\textcolor{black}{(t)} \log \dfrac{f_{i,j}\textcolor{black}{(t)}}{f^{\infty}_{i,j}}
$$
and $\mathcal{I}_{\Delta}$ is the positive discrete dissipation function
\begin{equation}\label{dissipation}
\begin{split}
\mathcal{I}_{\Delta }\textcolor{black}{(t)}=&\Delta w\sum_{j=0}^{N}\sum_{i=0}^{N}  \left[\log \left( \dfrac{f_{i+1,j}\textcolor{black}{(t)}}{f^{\infty}_{i+1,j}}\right)-\log \left(\dfrac{f_{i,j}\textcolor{black}{(t)}}{f^{\infty}_{i,j}}\right)\right]\left( \dfrac{f_{i+1,j}\textcolor{black}{(t)}}{f^{\infty}_{i+1,j}} -\dfrac{f_{i,j}\textcolor{black}{(t)}}{f^{\infty}_{i,j}}\right)\hat{f}^{\infty}_{i+1/2,j} \mathcal D^1_{i+1/2,j}\\
&+\sum_{i=0}^{N}\sum_{j=0}^{N} f_{i,j+1}\textcolor{black}{(t)} \left[ \log \left( \dfrac{f_{i,j+1}\textcolor{black}{(t)}}{f^{\infty}_{i,j+1}}\right)-\log \left(\dfrac{f_{i,j}\textcolor{black}{(t)}}{f^{\infty}_{i,j}}\right)\right]\left( \dfrac{f_{i,j+1}\textcolor{black}{(t)}}{f^{\infty}_{i,j+1}} -\dfrac{f_{i,j}\textcolor{black}{(t)}}{f^{\infty}_{i,j}}\right)\hat{f}^{\infty}_{i,j+1/2}\mathcal D^2_{i,j+1/2}.
\end{split}
\end{equation}
\end{theorem}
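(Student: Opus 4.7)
The plan is to follow the same pattern as the continuous computation in Theorem \ref{teo.1}, but replacing integration by parts with a discrete Abel-type summation by parts. The key algebraic ingredient will be the Landau-type representation of the numerical flux established in Theorem \ref{prop4}, which makes the dissipative structure manifest.

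First I would differentiate the discrete entropy directly:
\[
\frac{d}{dt}\mathcal{H}_\Delta(f,f^\infty)(t)=\Delta w^2\sum_{i,j}\frac{d f_{i,j}}{dt}\left(\log\frac{f_{i,j}}{f^\infty_{i,j}}+1\right).
\]
The constant ``$+1$'' contribution vanishes by the conservation of mass proved earlier (the lemma in the ``Conservation of mass'' subsection), since $\sum_{i,j}\dot f_{i,j}=0$ under the no-flux boundary conditions \eqref{no_flux.num}. Next I would substitute the conservative discretization \eqref{eq:cons_disc} for $\dot f_{i,j}$, which converts the $\Delta w^2$ prefactor into a single $\Delta w$ and yields two telescoping-type sums, one in the $x$-direction and one in the $y$-direction, weighted by $\log(f_{i,j}/f^\infty_{i,j})$.

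Then I would perform a discrete summation by parts (shifting the index $i\mapsto i-1$ in the term involving $\mathcal{F}^x_{i-1/2,j}$ and analogously for the $y$-contribution). The boundary contributions that appear vanish because of the discrete no-flux conditions $\mathcal{F}^x_{\pm 1/2,j}=\mathcal{F}^x_{N+1/2,j}=0$ and the analogous $y$-conditions. The result is
\[
\frac{d}{dt}\mathcal{H}_\Delta(t)=-\Delta w\sum_{i,j}\mathcal{F}^x_{i+1/2,j}\Bigl[\log\frac{f_{i+1,j}}{f^\infty_{i+1,j}}-\log\frac{f_{i,j}}{f^\infty_{i,j}}\Bigr]-\Delta w\sum_{i,j}\mathcal{F}^y_{i,j+1/2}\Bigl[\log\frac{f_{i,j+1}}{f^\infty_{i,j+1}}-\log\frac{f_{i,j}}{f^\infty_{i,j}}\Bigr].
\]

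At this stage I would invoke Theorem \ref{prop4} to rewrite each numerical flux in the Landau-like form $\mathcal{F}^x_{i+1/2,j}=\frac{\mathcal{D}^1_{i+1/2,j}}{\Delta w}\hat f^\infty_{i+1/2,j}\bigl(\tfrac{f_{i+1,j}}{f^\infty_{i+1,j}}-\tfrac{f_{i,j}}{f^\infty_{i,j}}\bigr)$ and similarly for $\mathcal{F}^y_{i,j+1/2}$. The $1/\Delta w$ in the flux cancels with the outer $\Delta w$, and the products $\bigl(\tfrac{f_{i+1,j}}{f^\infty_{i+1,j}}-\tfrac{f_{i,j}}{f^\infty_{i,j}}\bigr)\bigl[\log\tfrac{f_{i+1,j}}{f^\infty_{i+1,j}}-\log\tfrac{f_{i,j}}{f^\infty_{i,j}}\bigr]$ appear naturally. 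Collecting the $x$- and $y$-contributions gives exactly the expression $\mathcal{I}_\Delta(t)$ in \eqref{dissipation}, with an overall minus sign. Positivity of $\mathcal{I}_\Delta$ is immediate from the elementary inequality $(a-b)(\log a-\log b)\geq 0$ for $a,b>0$, together with the positivity of $\mathcal{D}^1_{i+1/2,j},\mathcal{D}^2_{i,j+1/2}$ (established just after \eqref{D.call}) and of the logarithmic means $\hat f^\infty_{i+1/2,j},\hat f^\infty_{i,j+1/2}$.

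The main technical point is really bookkeeping: one must perform the discrete integration by parts carefully on a $2$-D grid and confirm that every boundary term killed by \eqref{no_flux.num} cancels, so that no spurious surface contribution survives to spoil the identification with the stated $\mathcal{I}_\Delta$. The rest is a direct algebraic verification enabled by Theorem \ref{prop4}, which plays the role of the continuous identity used in \eqref{landau} for the PDE-level entropy calculation.
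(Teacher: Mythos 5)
Your proposal is correct and follows essentially the same route as the paper's proof: differentiate $\mathcal{H}_\Delta$, insert the conservative scheme \eqref{eq:cons_disc}, remove the constant and logarithmic weights by telescopic summation (discrete summation by parts) using the no-flux conditions \eqref{no_flux.num}, and then identify $-\mathcal{I}_\Delta$ via the Landau-type flux representation of Theorem \ref{prop4}, with positivity following from $(x-y)\log(x/y)\ge 0$ and $\mathcal{D}^1,\mathcal{D}^2>0$. The only cosmetic difference is that you dispose of the ``$+1$'' term by invoking mass conservation explicitly, whereas the paper lets it cancel inside the same telescoping step.
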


\begin{proof}
If we compute the time derivative of the discrete relative entropy we have that
\begin{equation*}\begin{split}
\dfrac{d}{dt} \mathcal{H}_{\Delta } (f,f^{\infty})\textcolor{black}{(t)} &=\Delta w^2\sum_{j=0}^{N}\sum_{i=0}^{N}  \dfrac{d f_{i,j}\textcolor{black}{(t)}}{dt}\left(1+\log\left(\dfrac{f_{i,j}\textcolor{black}{(t)}}{f_{i,j}^{\infty}}\right)\right)\\
&=\Delta w\sum_{j=0}^{N}\sum_{i=0}^{N}  \left(1+\log\left(\dfrac{f_{i,j}\textcolor{black}{(t)}}{f_{i,j}^{\infty}}\right)\right) \\
&\qquad \times \left( \mathcal{F}^{x}_{i+1/2,j}(t)-\mathcal{F}^x_{i-1/2,j}(t)+\mathcal{F}^y_{i,j+1/2}(t)-\mathcal{F}^y_{i,j-1/2}(t)\right).
\end{split}\end{equation*}
After telescopic summation and thanks to the identity of Proposition \ref{prop4} we obtain (\ref{dissipation}), which is positive because $\mathcal D^\alpha>0$, $\alpha = 1,2$ and $(x-y)\log(\frac{x}{y})$ is positive for all $x,y \geq 0$.

\end{proof}
\begin{remark}
We highlight that in the case in which $\mathbb{D}_{1,2}\textcolor{black}{(w)}=\mathbb{D}_{2,1}\textcolor{black}{(w)}=0$ and $\mathbb{D}\textcolor{black}{(w)}$ is isotropic, if we define an energy in the form
\begin{equation*}\label{energy}
\xi(w,t)=(U_p*f)(w,t)+\frac{\textrm{tr}(\mathbb{D}\textcolor{black}{(w)})}{2}\log(f\textcolor{black}{(w,t)})
\end{equation*}
which in our case corresponds to
\[
\mathcal{B}[f](w,t)=\nabla_w (U_p*f)(w,t),
\]
with $U_p=U_p(|w|)$ an interaction potential,
then we have that
\begin{equation*}\label{grad}
\nabla_w\xi(w,t)= \mathcal{B}[f](w,t)+\mathbb{D}\textcolor{black}{(w)}\nabla_w \log(f\textcolor{black}{(w,t)}).
\end{equation*}
Therefore, Eq. \eqref{FP1} may be written  in the form 
\begin{equation*}\label{grad.structure}
\partial_t f(w,t)=\nabla_{w}\cdot \left[ f(w,t)\nabla_w \xi(w,t)\right], \qquad w\in \Omega, 
\end{equation*}
and therefore in a gradient flow structure for which entropic averaged schemes may be used \cite{Pareschi_Zanella}.

\end{remark}


\section{Numerical tests}\label{sect:applications}
In this section we present some numerical examples of the class of Fokker-Planck \textcolor{black}{equations under study}  with nonconstant full diffusion matrix solved through structure-preserving schemes that have been introduced in the previous sections. 
We will approximate the long time behaviour of \eqref{FP1} with $d=2$, using the scheme defined by \eqref{eq:flux_th}-\eqref{Ltilde}-\eqref{delta}-\eqref{lambda}  with no-flux boundary conditions \eqref{no_flux.num}.
\textcolor{black}{In the following, we will show numerically how the high order approximation of the nonlinear weights \eqref{lambda} reflects in an improved accuracy of the large time behavior of  \eqref{FP1}. In particular, we consider open Newton-Cotes methods with $p=2,4,6$ points and we will also test a Gauss-Legendre quadrature.} For the Gaussian quadrature we considered 8 points in each numerical cell. In the sequel, we will adopt the notation $SP_k$, with  $k = 2,4,6,G$,  to denote the SP schemes with \eqref{lambda} that is evaluated with second, fourth, sixth order Newton-Cotes quadrature or Gaussian quadrature, respectively. We highlight how possible singularities at the boundaries are avoided using open quadrature rules. 

\subsection{Test 1. Validation}
In this subsection we consider a distribution function $f(w,t)$, $w \in[-1,1]\times [-1,1]$, whose evolution is given by \eqref{FP1} in which, given the diffusion matrix $\mathbb{D}$, we chose the drift operator in such a way that the flux vanishes. In particular, we consider a linear drift term in the form \eqref{w-U}
with a stationary state in the form 
\begin{equation}\label{ex.stat}
f_{\infty}(w)=C\exp\{-\phi(w)\},
\end{equation}
where $\phi(w)$ is a given function of the state variable, $C>0$ a normalization constant. Therefore the linear drift term will be in the form
\[
B(w):=-\nabla_w \cdot \mathbb{D}(w)-\mathbb{D}(w)\nabla_w \phi(w).
\]
This is the case in which we have entropy dissipation and convergence of order  $p$.
In particular, we shall consider $\mathbb D(w)$ a $2\times 2$ matrix of the form
\begin{equation}\label{eq:D_test1}
\mathbb{D}=
\left[\begin{matrix}
\dfrac{\sigma_1^2}{2}(1-w_x^2)^2 & \rho\dfrac{\sigma_1 \sigma_2}{4}(1-w_x^2)(1-w_y^2) 
\\ \rho\dfrac{\sigma_1\sigma_2}{4}(1-w_x^2)(1-w_y^2)  & \dfrac{\sigma_2^2}{2}(1-w_y^2)^2 
\end{matrix}\right],\qquad w_x,w_y\in[-1,1].
\end{equation}
As initial condition we consider 
\begin{equation}\label{initial}
f_0(w)=\beta\left[ \exp(-c(w_x+\mu)^2)\exp(-c(w_y+\mu)^2) +\exp(-c(w_x-\mu)^2)\exp(-c(w_y-\mu)^2\right],
\end{equation}
with $\mu = \dfrac{1}{2}$, $c=30$ and where $\beta>0$ is a normalization constant.
\begin{figure}[!htbp]
\centering
\includegraphics[scale=0.45]{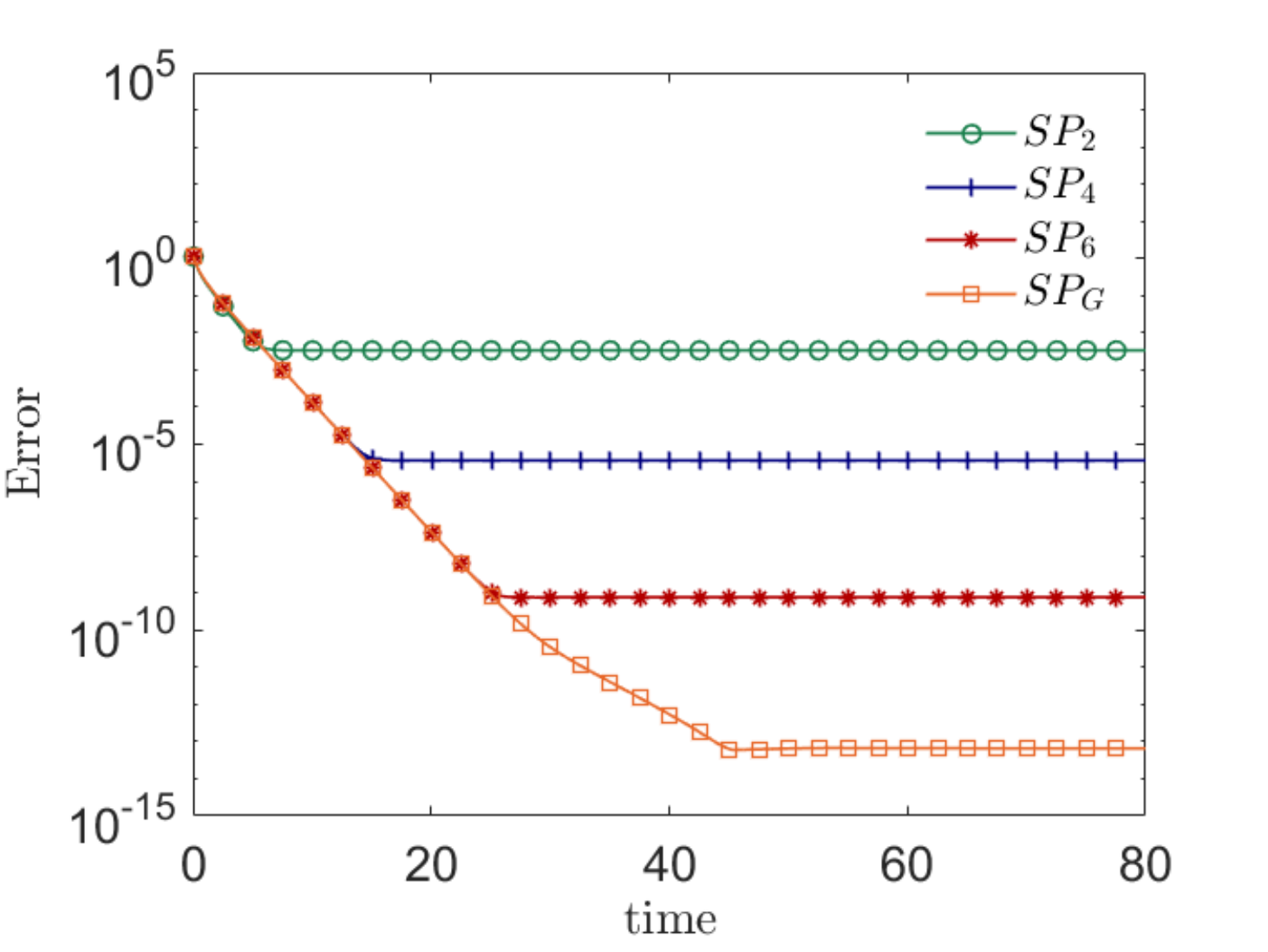}
\includegraphics[scale=0.45]{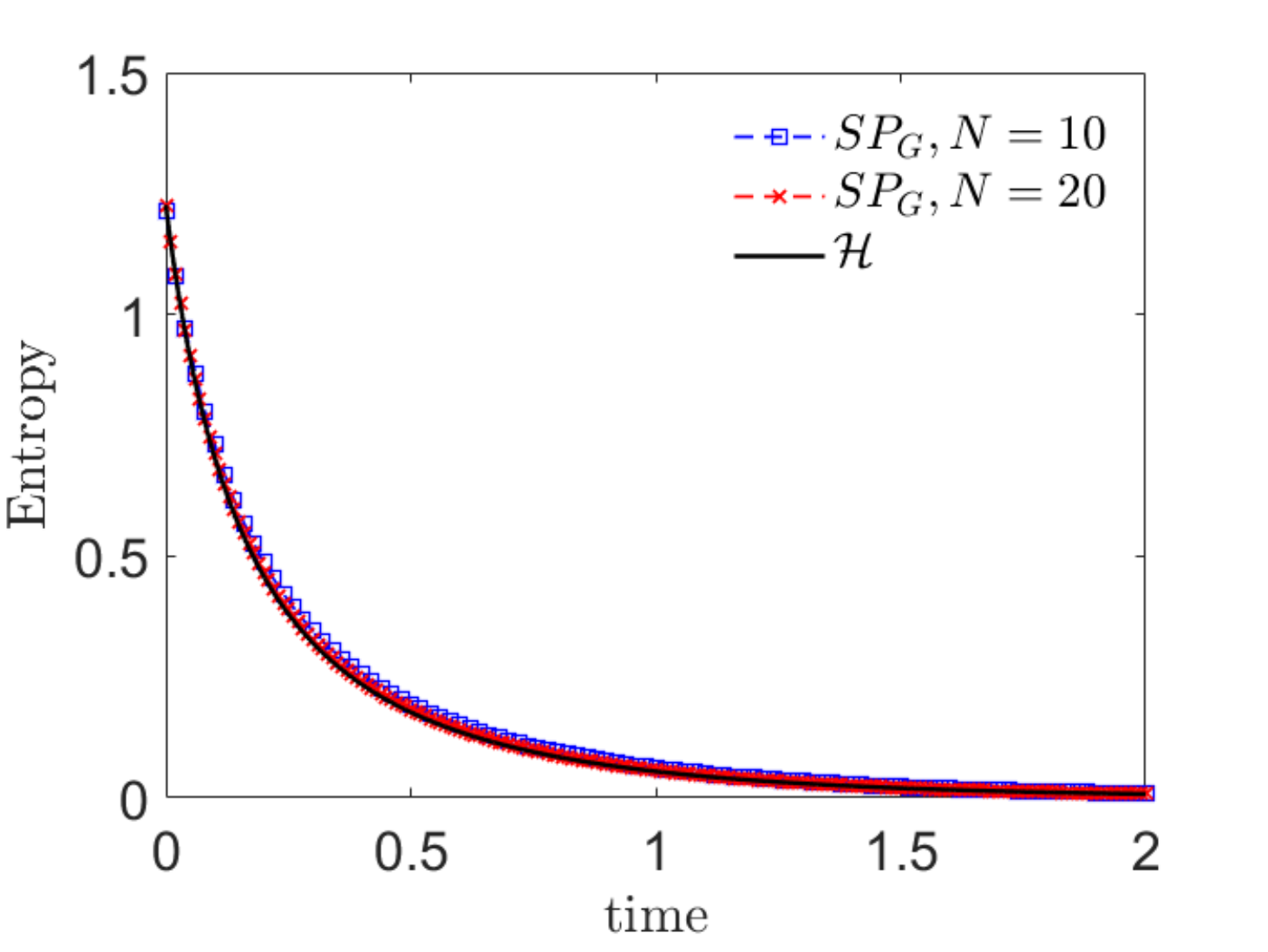}
\caption{\textbf{Test 1}. Left: evolution over the time interval $[0,80]$ of the relative \textcolor{black}{$L^1$ error  \eqref{err_L1}} computed with respect to the stationary solution \eqref{ex.stat} with $\phi(w)=-d\left(w_x^8+w_y^8\right)$, where $d=12.5$, for the $SP_k$ scheme with different quadrature methods. Initial distribution as in \eqref{initial} with $\sigma_1^2=\sigma_2^2=1$ and $\rho = 0.9$. We considered $\Delta t=\Delta w/(20\sigma_1^2)$, $\Delta w = 2/(N-1)$ and $N=81$. Right: dissipation of the numerical entropy for $SP_k$ scheme with Gaussian quadrature for two coarse grids with $N = 10$ and $N = 20$ points. }
\label{error1}
\end{figure}

\begin{table}
\centering
\begin{tabular}{l|| c c c c |  c c c c}
\hline\noalign\\[-10pt]
  & $SP_k$ & & & & $SP_k$ & & & \\ 
\hline
Time & 2 & 4 & 6 &G & 2 & 4 & 6 &G\\
\hline
1 & 1.9601& 1.6775&2.1106 &2.111 &1.9606 &1.8176  & 2.1015& 2.2103  \\
\hline
10&1.9662 & 3.9708& 7.4700 & 8.1449 & 1.9662&3.9708 & 7.4753& 8.1449 \\ 
\hline
20 &1.9662 & 3.9708 & 7.4768 & 8.1453 &1.9662 & 3.9708& 7.4760 & 8.1449  \\
\hline
\end{tabular}
\caption{\textbf{Test 1}. Estimation of the order of convergence  for $SP_k$ scheme with explicit Euler (left) and RK4 (right). Rates have been computed using $N = 21,41,81$ grid points in each component of the computational cell. We considered $\sigma_1^2=\sigma_2^2=1$, $\rho=0.1$, $\Delta t=\Delta w^2/(10\sigma_1^2\Delta w +10)$. }
\label{order2.esplicito}
\end{table}
In Figure \ref{error1} we compute the relative $L^1$ error of the numerical solution with respect to the exact stationary state $f^{\infty}$ given by \eqref{ex.stat}, i.e.
\textcolor{black}{\begin{equation}\label{err_L1}
e_r^N(t^n)=\dfrac{||f^n_{i,j}-f^{\infty}(w_{i,j})||_{L^1}}{||f^{\infty}(w_{i,j})||_{L^1}}
\end{equation}}
using $N = 81$ grid points for the $SP_k$ scheme with various quadrature rules. The different integration methods
capture the steady state with different accuracy. 
In particular low order quadrature rules
achieve their numerical steady state faster due to a saturation effect, whereas high order quadratures essentially reach machine precision in finite time. We considered in this plot semi-implicit time integration. In the same figure we illustrate how $SP_k$ scheme dissipates the relative entropy (\ref{entropy.dec}) in the case of two coarse grids with $N = 10$ and $N = 20$ points. 

In Table \ref{order2.esplicito} we estimate the order of convergence of the schemes for first order time integration and a fourth order Runge-Kutta integration that is computed as \textcolor{black}{$\log_2(e_r^N(T))$, with $N=81$ and $T$ is the final time of the numerical test}. The time step is chosen such that the CFL condition for the positivity of the scheme is satisfied, $\ie, \Delta t=\mathcal{O}(\Delta w^2)$. We may observe that in the transient regime the second order is maintained, whilst we reach higher orders for large times, expressing the order of the quadrature rules.
In Table \ref{order2} we estimate the order of convergence with first and
second order semi-implicit methods. We chose the time step $\Delta t = O(\Delta w) $ to meet the positivity bound derived in Proposition \ref{prop3}. Here again, we may observe that the scheme is second order accurate in the transient regime and describes the long time behaviour of the problem with the order employed for the evaluation of the nonlinear weights.

\begin{table}[!htbp]
\centering
\begin{tabular}{l|| c c c c |  c c c c}
\hline\noalign\\[-10pt]
$\rho = 0.1$ & $SP_k$ & & & & $SP_k$ & & & \\ 
\hline
Time & 2 & 4 & 6 &G & 2 & 4 & 6 &G\\
\hline
1&1.9625&1.4962 &1.6460& 1.6461& 1.9629&1.7472 &1.8889 & 1.8891  \\
\hline
10& 1.9662 &3.9708&7.3407&7.9144& 1.9662& 3.9708  &7.4765 & 7.8903  \\
\hline
20& 1.9662 & 3.9708& 7.4769& 7.9144& 1.9662& 3.9708 & 7.4772& 8.1457 \\ 
\hline \\
$\rho = 0.9$ & $SP_k$ & & & & $SP_k$ & & & \\ 
\hline
Time & 2 & 4 & 6 &G & 2 & 4 & 6 &G\\
\hline
1& 1.8570& 1.9049 & 1.9100 & 1.9100 &1.8878& 1.9559 & 1.9622 & 1.9622 \\
\hline
10& 1.9621&3.9678  &2.1457 & 2.1554 &1.9621&  4.0880& 2.4631  & 7.4904 \\
\hline
20& 1.9621& 3.9800 & 6.0613& 7.2470&1.9621 & 3.9800  & 6.0649 &  7.2697\\ 
\hline
50& 1.9621 & 3.9800 & 6.2146 &7.8973&1.9621 & 3.9800 & 6.2144 &  7.8964 \\
\hline

\end{tabular}
\caption{\textbf{Test 1}. Estimation of the order of convergence  for $SP_k$ scheme with first (left) and second order (right) semi-implicit methods. Rates have been computed using $N = 21,41,81$ grid points, $\sigma_1^2=\sigma_2^2=1$, $\Delta t=\Delta w/(20\sigma_1^2)$, and two correlation coefficients $\rho = 0.1$ (top) and $\rho = 0.9$ (bottom).}
\label{order2}
\end{table}

\subsection{Test 2. Alignment dynamics in bounded domains}
In this test we provide numerical evidence of the failure of the scheme on models with non-vanishing flux at equilibrium. Let us consider the evolution of a distribution function as in \eqref{FP1} with $w\in[-1,1]\times[-1,1]$, anisotropic diffusion introduced in \eqref{eq:D_test1}, and
\begin{equation}\label{eq:B_test2}
\mathcal{B}[f](w,t)=\int_{[-1,1]\times [-1,1]}P(w,w_*)(w-w_*)f(w_*,t)dw_*,
\end{equation}
with initial distribution of the form \eqref{initial}. We note that in this case we have no guarantee that the flux vanishes for large times. 

 \textcolor{black}{First of all we consider \eqref{eq:B_test2} with $P\equiv  1$. This corresponds to $\mathcal{B}[f](w,t)=B(w)=(w-U)$, $U=\int_\Omega f(w_*,t) w_*\, dw_*$ that is constant \textcolor{black}{since the mean }of $f$ is conserved. \textcolor{black}{It is worth to notice that we are in the case} in which a stationary state making the flux vanish may exist and we have decay of entropy.} \textcolor{black}{Since the stationary state of the problem is not known analytically, we computed the relative $L^1$ error for successive approximations. We denote with $f^{N_s}$ the approximation of $f$ done using a grid with $N_s$ points and we compute the error by considering as reference solution the one of the successive refinement of the computational grid, i.e.
 \begin{equation}\label{err_rel_succ}
 e_s(t^n)=\dfrac{||f^{N_s,n}_{i,j}-f^{N_{s-1},n}_{i,j}||_{L^1}}{||f^{N_{s+1},n}_{i,j}-f^{N_s,n}_{i,j}||_{L^1}}.
\end{equation}  
In detail, we chose $N_1= 21$, $N_2 = 41$, $N_3 = 81$ grid points. The order of convergence is then computed as $\log_2(e_s(t^n))$.} In Table \ref{order4.rk4} we estimate the order of convergence of the $SP_k$ scheme with explicit time integration methods. In particular, we present the case of first order forward Euler method and fourth order Runge-Kutta with a suitable time step to guarantee positivity of the scheme, i.e. $\Delta t = O(\Delta w^2)$. In Table \ref{order4} we estimate the order of convergence of the method in the case of semi-implicit time integration taking into account first and second order semi-implicit methods with $\Delta t = O(\Delta w)$. \textcolor{black}{We may observe that in both cases, the proposed scheme is not capable to approximate the large time solution of the problem with high order. Indeed we have no theoretical guarantee that the flux vanishes at the equilibrium that is the assumption under which the scheme has been derived.} \textcolor{black}{The time evolution of the approximated solution are represented in Figure \ref{swarming.bounded}.}
 
\begin{table}
\centering
\begin{tabular}{l|| c c c c |  c c c c}
\hline\noalign\\[-10pt]
  & $SP_k$ & & & & $SP_k$ & & & \\ 
\hline
Time & 2 & 4 & 6 &G & 2 & 4 & 6 &G \\
\hline
1 &2.0830 &2.1102 &2.3204 &2.4229  & 2.1320& 2.3606 &2.3602 & 2.3602 \\
\hline
10 &2.0914 &2.2000 & 2.3614& 2.5143 & 2.4199& 2.8006& 2.8195& 2.8199 \\ 
\hline
20&2.0914 &3.7579 &4.0746 &3.8000  &2.8741 &3.7503 & 3.9163 & 3.8875  \\
\hline
\end{tabular}
\caption{\textbf{Test 2}. Estimation of the order of convergence  for $SP_k$ scheme with explicit Euler (left) and RK4 (right). Rates have been computed using $N = 21,41,81$ grid points in each component of the computational cell. We considered $P\equiv 1$, $\sigma_1^2=\sigma_2^2=1$, $\rho=0.1$, $\Delta t=\Delta w^2/(10\sigma_1^2\Delta w +10)$. }
\label{order4.rk4}
\end{table}

\begin{table}
\centering
\begin{tabular}{l|| c c c c|c c c c}
\hline\noalign\\[-10pt]
  & $SP_k$ & & & & $SP_k$ & & &  \\ 
\hline
Time & 2 & 4 & 6 &G & 2 & 4 & 6 &G\\
\hline
1  & 1.9585& 2.0242 &2.2398 & 2.2615 & 1.9612& 2.1190 & 2.2398 & 2.2732 \\
\hline
10 & 2.0694& 3.9977& 3.6949& 3.6477 & 2.0685 & 3.9643 & 3.6601&  3.6140\\ 
\hline
20& 2.0695 & 3.9982 & 3.6957 & 3.6486 & 2.0686 & 3.9643 & 3.6608 &  3.6140 \\
\hline
\end{tabular}
\caption{\textbf{Test 2}. Estimation of the order of convergence  for $SP_k$ scheme with first (left) and second order (right) semi-implicit integration. Rates have been computed using $N = 21,41,81$, $P\equiv 1$, $\sigma_1^2=\sigma_2^2=1$, $\rho=0.1$, $\Delta t=\Delta w/(20\sigma_1^2)$. }
\label{order4}
\end{table}

\begin{figure}
\centering
\subfigure[$t = 0.2$]{\includegraphics[scale=0.32]{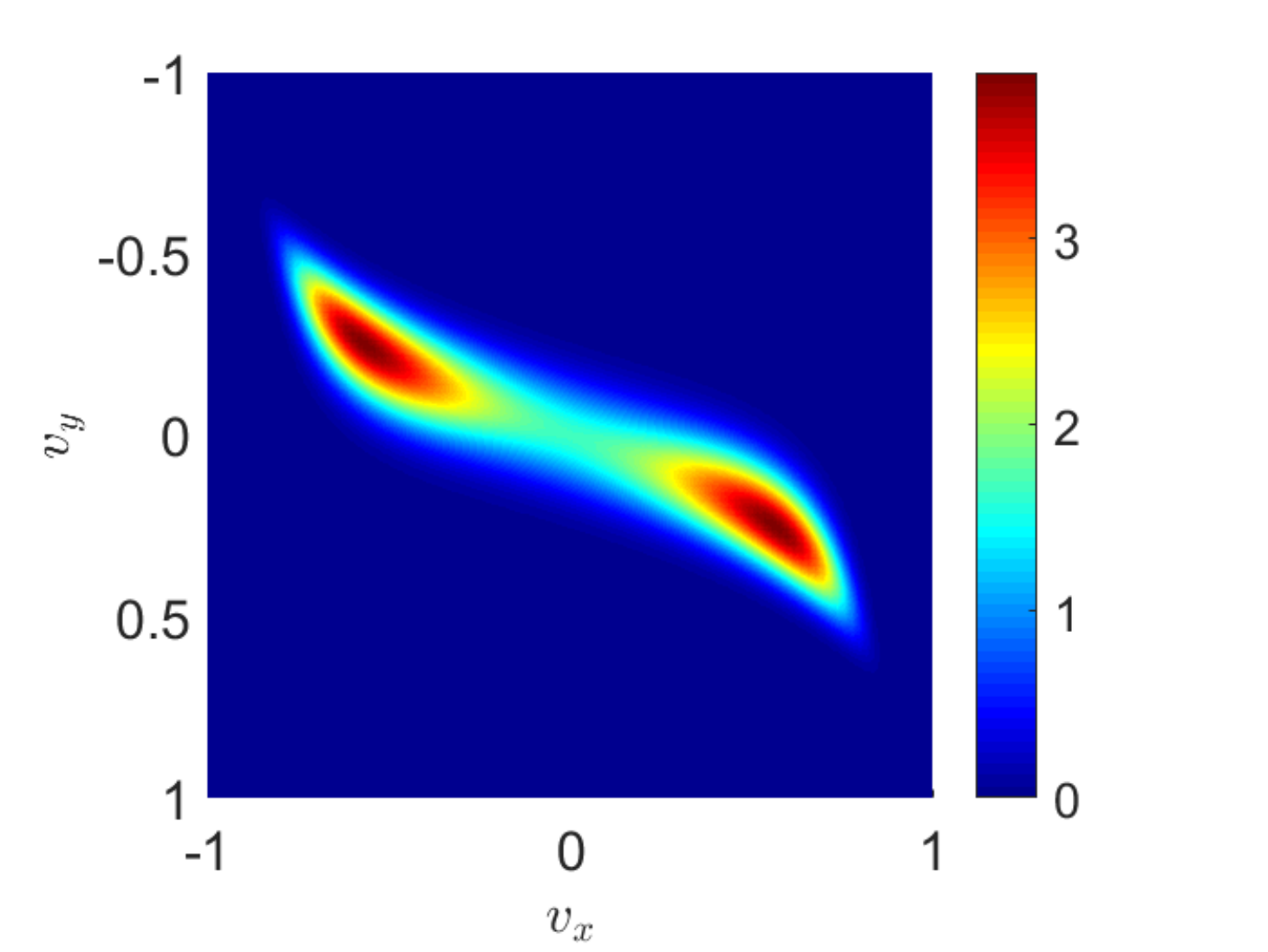}}
\subfigure[$t = 0.4$]{\includegraphics[scale=0.32]{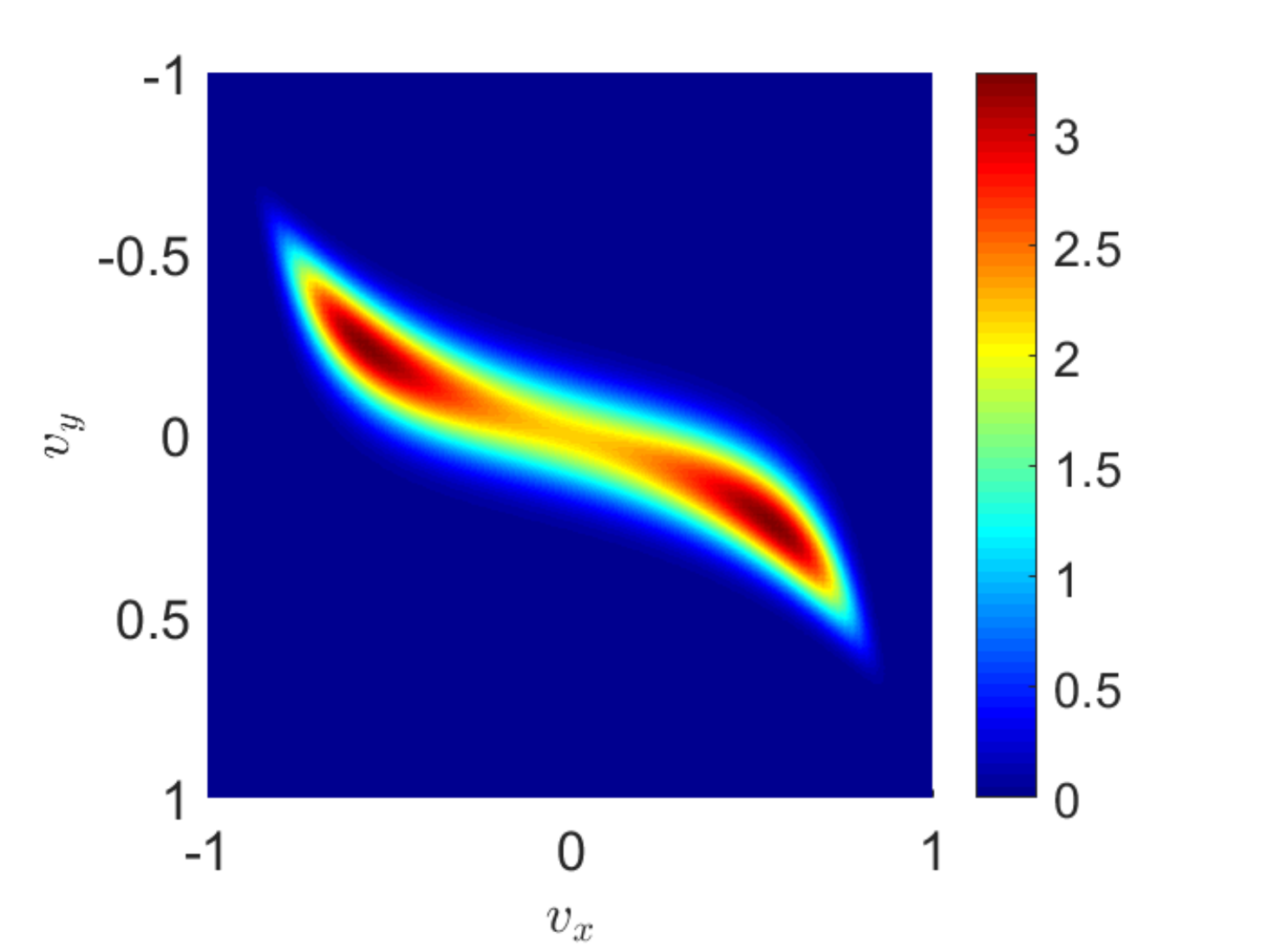}} 
\subfigure[$t = 1$]{\includegraphics[scale=0.32]{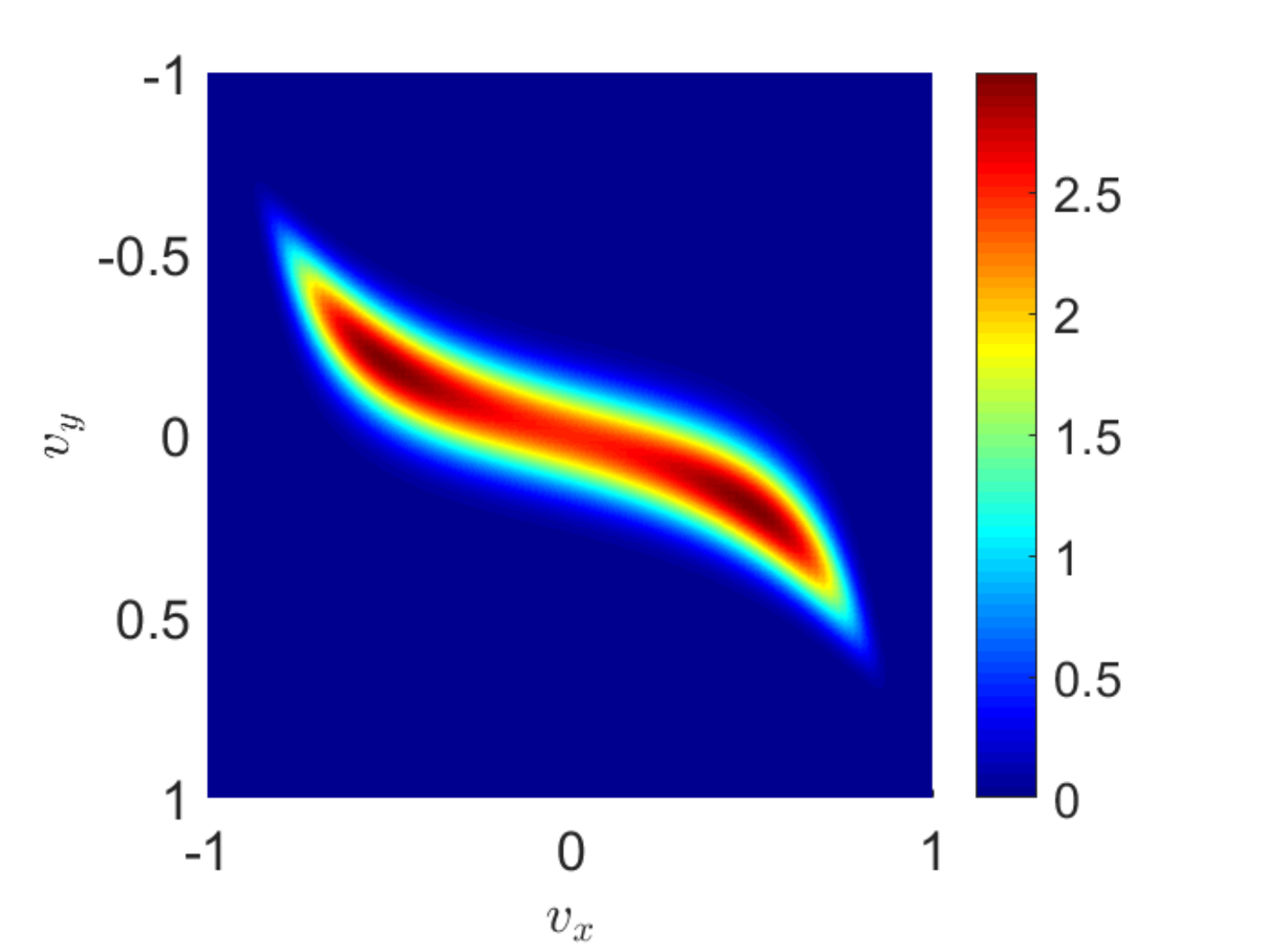}}\\
\subfigure[$t = 0.03$]{\includegraphics[scale=0.32]{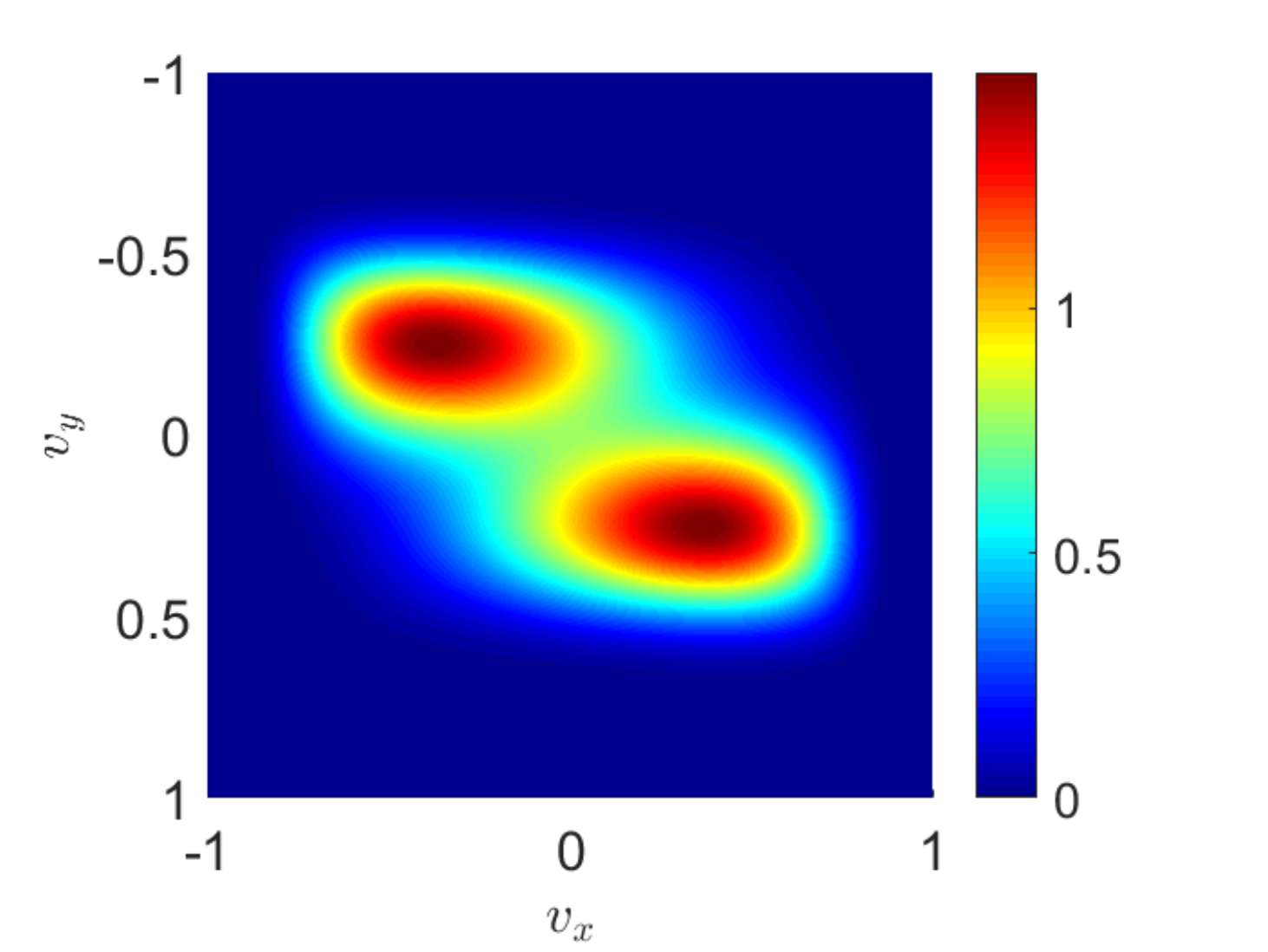}}
\subfigure[$t = 0.05$]{\includegraphics[scale=0.32]{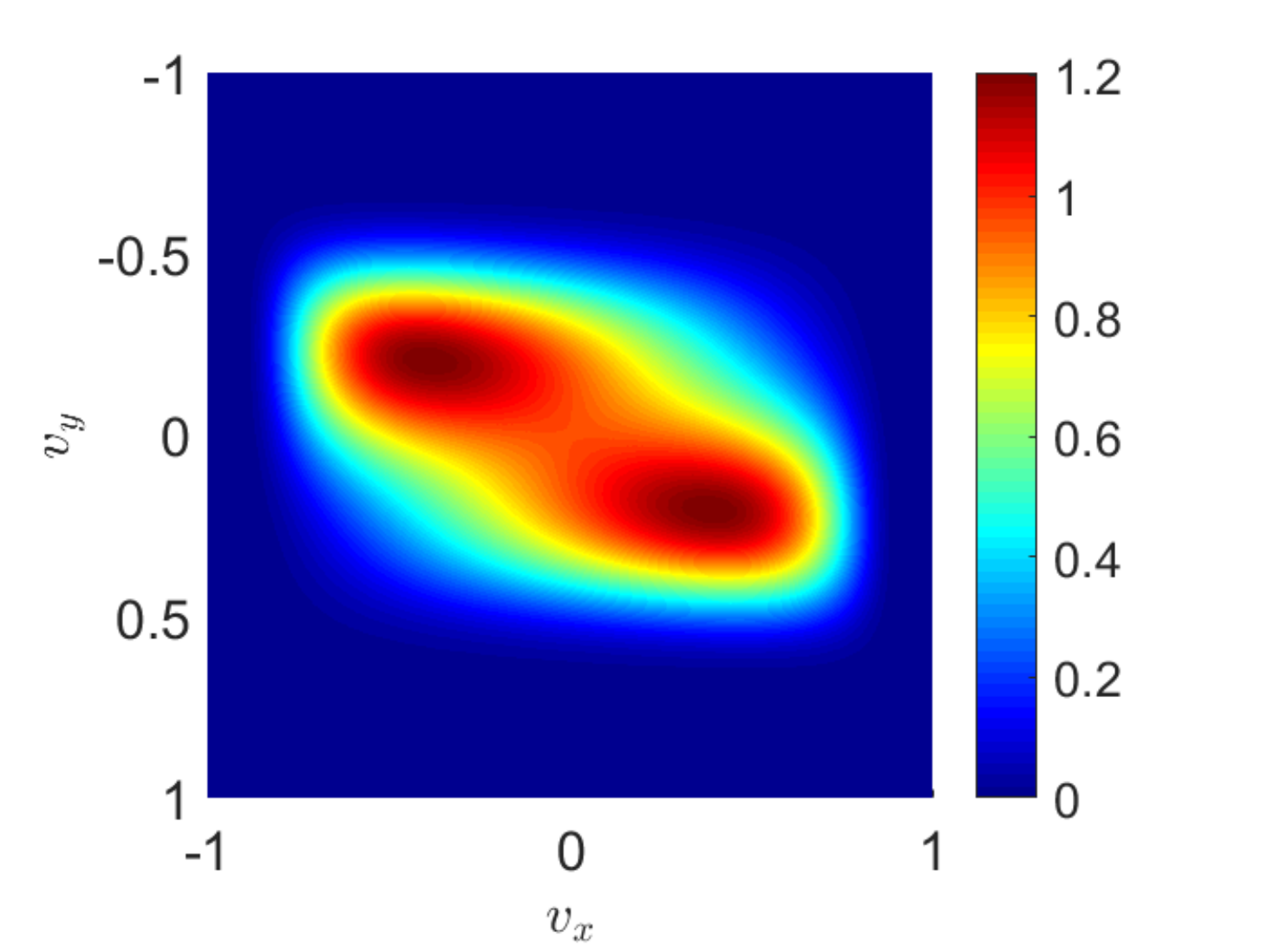}} 
\subfigure[$t = 0.2$]{\includegraphics[scale=0.32]{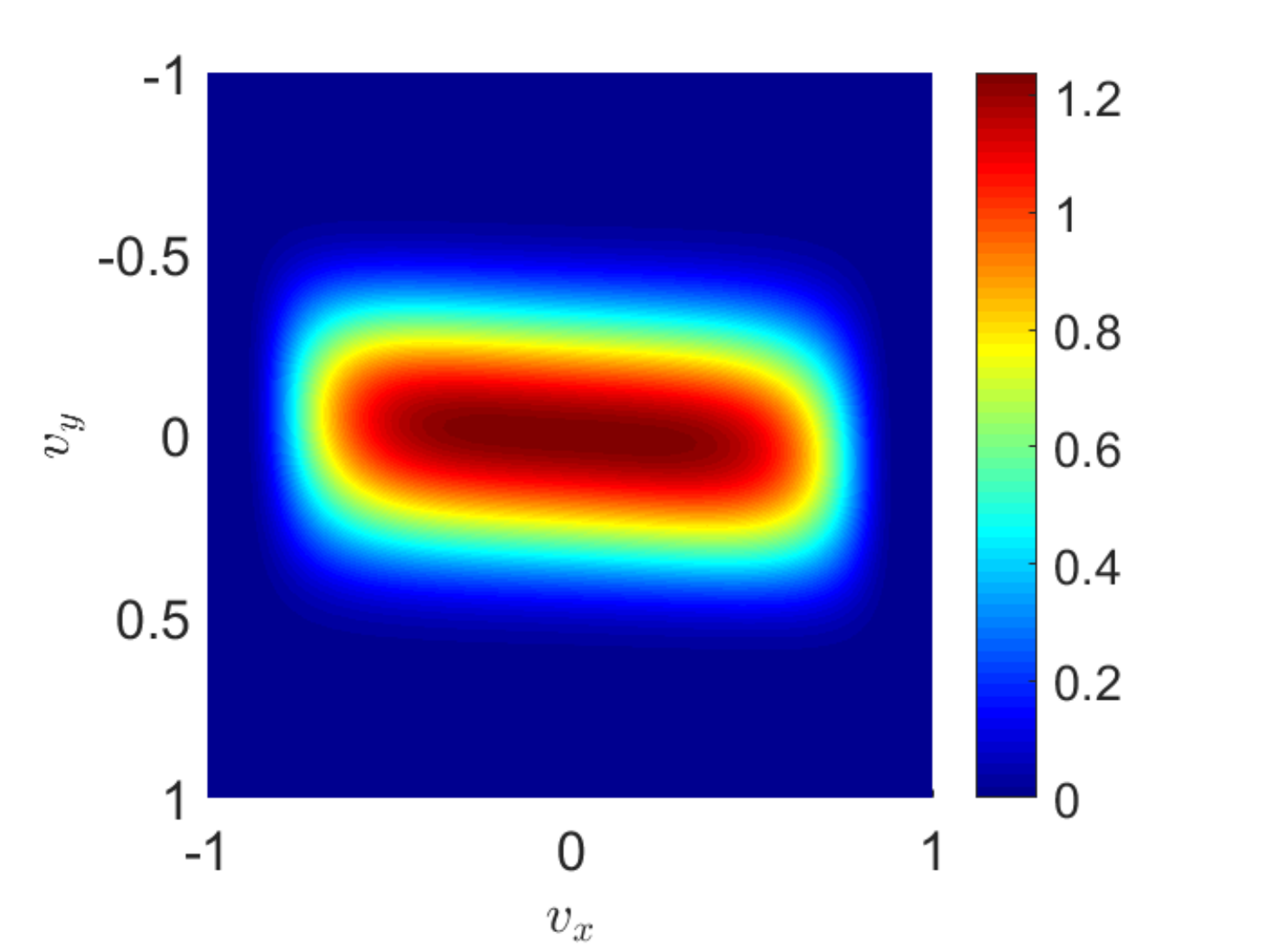}}\\
\caption{\textbf{Test 2}. Evolution of the numerical solution of the nonlinear Fokker-Planck equation with drift \eqref{eq:B_test2}, $P\equiv 1$, and anisotropic diffusion matrix \eqref{eq:D_test1} with $\sigma_1^2 = 0.1$, $\sigma_2^2 = 0.5$ and correlation coefficient $\rho = 0.9$ (top row) and $\rho = 0.1$ (bottom row).  The numerical solution has been computed with $N = 101$ grid points in both components and semi-implicit time integration with $\Delta t = \Delta w/(20 \max\{\sigma_1^2,\sigma_2^2\})$.}
\label{swarming.bounded}
\end{figure}

\section*{Conclusion}
We studied the construction of structure preserving methods for a class of \textcolor{black}{two-dimensional} Fokker-Planck equations with full nonconstant diffusion matrix. We have derived the schemes for stationary states that make the flux vanish. We have proved that mass conservation and positivity of the solution both with explicit and semi-implicit time integration hold even for problems with a non-vanishing flux at the steady state. Furthermore, the methods here developed are positivity preserving without any restriction on the discretization of the state variable both in the case of explicit and of semi-implicit time integration methods, the latter in particular lead to more mild restrictions on the time step that are very useful in the high-dimensional case. \textcolor{black}{On the other hand, the evolution scheme is in general equilibrium preserving for Fokker-Planck-type equations with nonconstant diffusion matrix if the drift is such that the flux function vanishes at the steady state and if it does not depend on the solution. } Under these assumptions we also showed entropy decay of the problem and that that the introduced scheme dissipates the numerical entropy. We also presented numerical evidence of the theoretical findings. \textcolor{black}{Extensions of the proposed scheme to dimensions higher than two are currently under study and will be presented elsewhere.} 

\section*{Acknowledgements}
This research was partially supported by the Italian Ministry of Education, University and Research (MIUR) through the ``Dipartimenti di Eccellenza'' Programme (2018-2022) -- Department of Mathematical Sciences ``G. L. Lagrange'', Politecnico di Torino (CUP: E11G18000350001).

Both authors are members of GNFM (Gruppo Nazionale per la Fisica Matematica) of INdAM (Istituto Nazionale di Alta Matematica), Italy.

NL would like to thank Compagnia San Paolo for financing her PhD scholarship.

\end{document}